\documentclass[12pt]{amsart}

\usepackage{graphicx}
\usepackage{subfigure}
\usepackage{multirow}
\usepackage{amsfonts}
\usepackage[a4paper,left=2.4cm,right=2.3cm,top=2cm,bottom=1.8cm]{geometry}%
\usepackage{amsmath,amssymb,mathrsfs}
\usepackage{amsthm}
\usepackage{textcomp}
\usepackage{color}

\usepackage{hyperref}
\usepackage{float}
\usepackage{epsfig}
\usepackage{epstopdf}
\usepackage{array,booktabs}
\usepackage{tabularx}
\usepackage[pagewise]{lineno} 
\usepackage{comment} 
\usepackage{bm}
\usepackage{algorithm}
\usepackage{algpseudocode}
\usepackage{listings}
\lstset{language=C}


\newtheorem{proposition}{Proposition}
\newtheorem{lemma}{Lemma}
\newtheorem{corollary}{Corollary}

\newtheorem{remark}{Remark}

\title{Convolutional optimization with convex kernel and power lift}
\author{Zhipeng Lu}
\address{Shenzhen MSU-BIT University, \& Guangdong Laboratory of Machine Perception and Intelligent Computing, Shenzhen, Guangdong 518172, China}

\email{zhipeng.lu@hotmail.com}

\keywords{optimization, convex, convolution}


\begin{document}

\maketitle

\section{Introduction}
In this work, we focus on establishing the foundational paradigm of a novel optimization theory based on convolution with convex kernels. Our goal is to devise a morally deterministic model of locating the global optima of an arbitrary function, which is distinguished from most commonly used statistical models. Limited preliminary numerical results are provided to test the efficiency of some specific algorithms derived from our paradigm, which we hope to stimulate further practical interest.

\section{Theoretical basics}
Our paradigms of optimization theory are motivated by the following basic mathematical facts. 
\subsection{Convolution preserving convexity}\label{sec-convex}
Our optimization paradigm is based on a simple observation on the preservation of convexity by convolution. We start with the univariate case.

\begin{lemma}\label{lem-1-dim convex}
    Suppose $f(x)\geq 0$ is continuous and $g(x)$ is convex. Then their convolution $g\ast f(x)$ is convex. 
\end{lemma}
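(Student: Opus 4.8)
The plan is to verify the two-point convexity inequality for $g \ast f$ directly from the definition of convolution, pushing the convexity of $g$ through the integral by exploiting the sign condition $f \geq 0$. Writing the convolution as $(g \ast f)(x) = \int_{\mathbb{R}} g(x - y)\, f(y)\, dy$, I would fix two points $x_1, x_2$ and a weight $\lambda \in [0,1]$, and aim to bound $(g \ast f)\bigl(\lambda x_1 + (1 - \lambda) x_2\bigr)$ from above by $\lambda\,(g \ast f)(x_1) + (1 - \lambda)\,(g \ast f)(x_2)$, which is exactly the definition of convexity.

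The key algebraic observation is that the shift variable distributes over the convex combination: for each fixed $y$,
\[
\lambda x_1 + (1-\lambda) x_2 - y = \lambda (x_1 - y) + (1 - \lambda)(x_2 - y).
\]
Applying the convexity of $g$ to the right-hand side yields the pointwise (in $y$) inequality
\[
g\bigl(\lambda x_1 + (1-\lambda)x_2 - y\bigr) \leq \lambda\, g(x_1 - y) + (1-\lambda)\, g(x_2 - y).
\]

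The crucial step is to multiply this inequality by $f(y)$. Because $f(y) \geq 0$, the direction of the inequality is preserved, giving
\[
g\bigl(\lambda x_1 + (1-\lambda)x_2 - y\bigr) f(y) \leq \lambda\, g(x_1 - y) f(y) + (1-\lambda)\, g(x_2 - y) f(y).
\]
Integrating over $y$ and invoking monotonicity of the integral then produces precisely the desired two-point inequality for $g \ast f$, establishing convexity. I would stress that the nonnegativity hypothesis on $f$ is used exactly here, and nowhere else: it is what prevents the convex inequality from flipping when weighted by $f$.

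The main obstacle I anticipate is not the inequality itself but the analytic bookkeeping that makes every integral legitimate: one must ensure the convolution is finite at the three points involved and that the integral of the right-hand side splits into two separate integrals. In the clean settings (say $f$ continuous with compact support, or $f \in L^1$ with $g$ locally bounded) this is routine, so I would state the standing integrability assumption explicitly and let finiteness of each term justify the splitting. Continuity of $f$ then plays only this supporting role of guaranteeing the integrals make sense, while the entire force of the argument is carried by the convexity of $g$ together with the sign of $f$.
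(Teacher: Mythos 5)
Your proof is correct and follows essentially the same route as the paper: both arguments push the convexity of $g$ pointwise through the integral, using $f \geq 0$ to preserve the inequality, the only difference being that you verify the general $\lambda$-weighted inequality while the paper checks only the midpoint version ($\lambda = \tfrac{1}{2}$). If anything, your variant is slightly more complete, since midpoint convexity alone upgrades to full convexity only under an additional regularity assumption on $g \ast f$, a point the paper leaves implicit.
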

\begin{proof}
    For any $\theta_1,\theta_2\in\mathbb{R}$, write 
    \begin{align*}
        &g\ast f(\theta_1)+g\ast f(\theta_2)-2g\ast f\left(\frac{\theta_1+\theta_2}{2}\right)\\
        =&\int_{\mathbb{R}}g(\theta_1-t)f(t)dt+\int_{\mathbb{R}}g(\theta_2-t)f(t)dt-2\int_{\mathbb{R}}g\left(\frac{\theta_1+\theta_2}{2}-t\right)f(t)dt\\
        =&\int_{\mathbb{R}}\left(g(\theta_1-t)+g(\theta_2-t)-2g\left(\frac{(\theta_1-t)+(\theta_2-t)}{2}\right)\right)f(t)dt.
    \end{align*}
    If $g$ is convex, $g(\theta_1-t)+g(\theta_2-t)-2g\left(\frac{(\theta_1-t)+(\theta_2-t)}{2}\right)\geq0$ so that the last integral is nonnegative provided $f\geq0$. Thus, $g\ast f$ is convex.
\end{proof}
When $g$ has a continuous second derivative, the proof becomes more concise notionally. Note that $(g\ast f)'=g'\ast f$ and $(g\ast f)''=g''\ast f$. Hence, if $g$ is convex with a continuous second derivative, then $g''\geq 0$ so that $g''\ast f\geq 0$ given that $f\geq 0$. That is, the convolution $g\ast f$ is convex. . In higher dimensions, similarly by linearity $\frac{\partial(g\ast f)}{\partial x_i}=\frac{\partial g}{\partial x_i}\ast f$ and $\frac{\partial^2 g\ast f}{\partial x_i\partial x_j}=\frac{\partial^2 g}{\partial x_i\partial x_j}\ast f$. Also note that convexity in $\mathbb{R}^n$ is equivalent to convexity along any line, i.e. $g(x_0+tv)$ is convex in $t$ for any $x_0, v\in\mathbb{R}^n$. In other words, since $\frac{d^2g(x_0+tv)}{dt^2}=v^T(\nabla^2g)v$ with $\nabla^2g$ the Hessian of $g$, convexity of $g$ means $\nabla^2 g$ is positive semi-definite. Hence again by linearity, for any $x_0,v\in\mathbb{R}^n$, 
\[\frac{d^2 (g(x_0+tv)\ast f)}{dt^2}=(v^T\nabla^2 gv)\ast f,\]
which is non-negative if $f\geq 0$ and $g$ is convex. Thus, without further digestion, for general convex functions in higher dimensions, we also have

\begin{corollary}\label{cor-convex}
    Suppose $f\geq 0$ is continuous and $g$ is convex on $\mathbb{R}^n$ for any $n\geq 1$. Then $g\ast f$ is convex.
\end{corollary}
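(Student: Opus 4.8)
The plan is to bypass the Hessian heuristic sketched above---which tacitly requires $g$ to be twice differentiable---and instead repeat the elementary argument of Lemma \ref{lem-1-dim convex} almost verbatim, now using the full definition of convexity for arbitrary convex combinations in $\mathbb{R}^n$. This keeps the statement valid for every convex $g$, including nonsmooth ones, and avoids any appeal to Alexandrov-type almost-everywhere second differentiability that would be needed to make the $\nabla^2 g$ argument fully rigorous.

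First I would fix $\theta_1,\theta_2\in\mathbb{R}^n$ and $\lambda\in[0,1]$ and write out the defining difference
\begin{align*}
&\lambda\,(g\ast f)(\theta_1)+(1-\lambda)\,(g\ast f)(\theta_2)-(g\ast f)\big(\lambda\theta_1+(1-\lambda)\theta_2\big)\\
=&\int_{\mathbb{R}^n}\Big(\lambda g(\theta_1-t)+(1-\lambda)g(\theta_2-t)-g\big(\lambda\theta_1+(1-\lambda)\theta_2-t\big)\Big)f(t)\,dt,
\end{align*}
collapsing the three convolution integrals into a single one by linearity of the integral.

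The decisive observation is the translation identity $\lambda\theta_1+(1-\lambda)\theta_2-t=\lambda(\theta_1-t)+(1-\lambda)(\theta_2-t)$, so that the bracketed integrand is exactly the convexity defect of $g$ evaluated at the two points $\theta_1-t$ and $\theta_2-t$ with weights $\lambda$ and $1-\lambda$. Convexity of $g$ makes this defect nonnegative for every $t$, and since $f\geq0$ the whole integrand is nonnegative; hence the integral is $\geq0$ and $g\ast f$ satisfies the convexity inequality. The midpoint case $\lambda=\tfrac12$ recovers precisely the computation in Lemma \ref{lem-1-dim convex}, so no idea beyond the dimension-free convexity inequality is required, and the characterization ``convex along every line'' mentioned in the text is never actually needed.

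The only genuine obstacle is analytic rather than algebraic: one must ensure the convolution is well-defined and finite so that splitting and recombining the integrals is legitimate. I would therefore carry a standing hypothesis guaranteeing this---for instance $f\in L^1(\mathbb{R}^n)$ with compact support (or suitable decay) together with $g$ locally integrable and bounded below on the relevant translates---so that each of the three integrals converges. If one prefers to drop all growth assumptions, the conclusion still holds in the extended-real-valued sense, since the pointwise integrand remains nonnegative and the convexity inequality is preserved even when the convolution takes the value $+\infty$.
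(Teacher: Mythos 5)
Your proof is correct, and it takes a genuinely different route from the paper's. The paper reaches Corollary \ref{cor-convex} through the directional second-derivative computation $\frac{d^2}{dt^2}\bigl(g(x_0+tv)\ast f\bigr)=(v^T\nabla^2 g\,v)\ast f\geq 0$, which tacitly assumes $g\in C^2$, and then asserts the general case ``without further digestion''; you instead lift the convexity-defect argument of Lemma \ref{lem-1-dim convex} to $\mathbb{R}^n$, with arbitrary weights $\lambda$ in place of midpoints. Your route is both more elementary and more general: it covers nonsmooth convex $g$ with no appeal to mollification or Alexandrov-type second differentiability, which matters here because the paper's own kernel (\ref{eq-higher g}) is only $C^1$ (its Hessian jumps across the sphere $\|x\|=\delta$), so the smooth argument does not literally apply to the very functions the theory uses. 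Working with general $\lambda$ also quietly removes the step, left implicit in Lemma \ref{lem-1-dim convex}, of upgrading midpoint convexity to convexity via continuity. Finally, you are the only one to flag the real analytic issue: admissible kernels grow linearly at infinity, so $g\ast f$ is finite only under a decay or compact-support hypothesis on $f$ (or one reads the inequality in the extended-real-valued sense); the paper never addresses this. What the paper's Hessian route buys in exchange is brevity and the motivating link between convexity and positive semi-definiteness of $\nabla^2 g$ that drives the later constructions, but as a proof of the stated corollary yours is the complete one.
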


In practice, if $f$ is not nonnegative, we may increase it by a large enough constant $C>0$ so that $f+C\geq 0$; the continuity of $f$ is necessary as in most algorithms in which discrete sampling is utilized following suitable probability distributions. As we will see, whether $f$ has a unique maximum or not is crucial in our paradigm especially in the process of ``integral concentration".   

\subsection{Critical point approximation}\label{sec-critical}
 The next key principle of our paradigm is to approach the maximum point of $f$ by those of $g_n\ast f$ with a series of convex functions $g_n$. The following observation on critical points of univariate convolutions is key to our construction. 
 
\begin{lemma}\label{lem-critical point}
    Suppose $g$ has a continuous first derivative $|g'(x)|\leq 1$ and $f\geq 0$. If there exist $\delta, \epsilon>0$ such that $|g'(x)|>\frac{1}{2}$ for $|x|\geq\delta$ and $\int_{|x-a|\leq\epsilon}fdx\geq \frac{2}{3}\int_{\mathbb{R}}fdx$ for some $a$, then any potential stationary point of $g\ast f$ lies in $(a-\delta-\epsilon, a+\delta+\epsilon)$.
\end{lemma}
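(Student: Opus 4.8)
The plan is to work with the derivative identity $(g\ast f)'=g'\ast f$ established in Section~\ref{sec-convex}, so that a stationary point $\theta^\ast$ of $g\ast f$ is precisely a solution of
\[
(g'\ast f)(\theta^\ast)=\int_{\mathbb{R}}g'(\theta^\ast-t)\,f(t)\,dt=0.
\]
I would assume $\int_{\mathbb{R}}f\,dt>0$ (otherwise $g\ast f\equiv 0$ and the claim is vacuous) and argue by contradiction: suppose $\theta^\ast\geq a+\delta+\epsilon$, the case $\theta^\ast\leq a-\delta-\epsilon$ being symmetric, and show that this integral cannot vanish.

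First I would pin down the sign of $g'$ on the far rays. Since $g'$ is continuous and $|g'(x)|>\tfrac{1}{2}$ (hence $g'(x)\neq 0$) for every $x\geq\delta$, the intermediate value theorem forces $g'$ to keep a constant sign on the connected set $[\delta,\infty)$; say $g'>\tfrac{1}{2}$ there, the opposite case running identically with reversed inequalities. The role of the concentration hypothesis is now exactly this: when $\theta^\ast\geq a+\delta+\epsilon$ and $|t-a|\leq\epsilon$ we have $\theta^\ast-t\geq\delta$, so $g'(\theta^\ast-t)>\tfrac{1}{2}$ holds throughout the mass-concentrated window $E=\{\,|t-a|\leq\epsilon\,\}$.

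Then I would split the integral over $E$ and its complement. Writing $M=\int_{\mathbb{R}}f$ and $I=\int_E f\geq\tfrac{2}{3}M$, the window contributes strictly more than $\tfrac{1}{2}I$ (strictly, because $g'(\theta^\ast-t)-\tfrac{1}{2}>0$ and $f\geq 0$ with $I>0$), while on the complement the crude bound $|g'|\leq 1$ gives a contribution of at least $-(M-I)$. Hence
\[
(g'\ast f)(\theta^\ast)>\tfrac{1}{2}I-(M-I)=\tfrac{3}{2}I-M\geq\tfrac{3}{2}\cdot\tfrac{2}{3}M-M=0,
\]
contradicting stationarity. The left-hand case is identical, using the constant sign of $g'$ on $(-\infty,-\delta]$, so that every stationary point must lie in the open interval $(a-\delta-\epsilon,a+\delta+\epsilon)$.

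The only routine analytic point to verify is the differentiation identity $(g\ast f)'=g'\ast f$, which needs merely $f\in L^1$ together with the bound $|g'|\leq 1$. The genuinely delicate step is extracting a \emph{strict} inequality, since the constants are exactly calibrated: the threshold $\tfrac{2}{3}$ is precisely what makes $\tfrac{3}{2}I-M\geq 0$, so the estimate would collapse to a useless ``$\geq 0$'' were it not for the observation that $g'>\tfrac{1}{2}$ holds with strict room to spare on a set carrying positive $f$-mass. I therefore expect the care needed to keep that one inequality strict, rather than any computation, to be the crux of the argument.
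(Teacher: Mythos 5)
Your proposal is correct and is essentially the paper's own argument: the same decomposition of $(g'\ast f)(\theta)$ into the mass window $\{|t-a|\leq\epsilon\}$ and its complement, with $|g'|\leq 1$ bounding the complement contribution by $\tfrac{1}{3}\int_{\mathbb{R}}f$ and $|g'|>\tfrac{1}{2}$ on the window contributing strictly more than $\tfrac{1}{2}\int_{|t-a|\leq\epsilon}f\geq\tfrac{1}{3}\int_{\mathbb{R}}f$ in a definite direction. If anything, your write-up is slightly more careful than the paper's on two minor points it glosses over: the intermediate-value argument fixing a constant sign of $g'$ on each ray (the paper tacitly assumes the absolute-value-like sign pattern $g'<-\tfrac12$ on the left ray, $g'>\tfrac12$ on the right), and the explicit identification of where the strict inequality comes from (together with the tacit assumption $\int_{\mathbb{R}}f>0$).
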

\begin{proof}
    We decompose the integral of $g\ast f$ into two parts ($B_\epsilon(0)=(-\epsilon, \epsilon)$):
    \begin{align*}
        (g\ast f)'(\theta)&=\int_{\mathbb{R}}g'(\theta-a-x)f(a+x)dx\\
        &=\int_{B_\epsilon(0)}g'(\theta-a-x)f(a+x)dx+ \int_{\mathbb{R}\smallsetminus
        B_\epsilon(0)}g'(\theta-a-x)f(a+x)dx.
    \end{align*}
    The second part is easily estimated as
    \[\left|\int_{\mathbb{R}\smallsetminus
        B_\epsilon(0)}g'(\theta-a-x)f(a+x)dx\right|\leq \int_{\mathbb{R}\smallsetminus
        B_\epsilon(0)}f(a+x)dx<\frac{1}{3}\int_{\mathbb{R}}f(x)dx.\]
    If $|\theta-a|\geq \delta+\epsilon$, then $|\theta-a-x|\geq\delta, \forall x\in B_\epsilon(0)$ so that
    \[\int_{B_\epsilon(0)}g'(\theta-a-x)f(a+x)dx\begin{cases}
        >\frac{1}{2}\int_{B_\epsilon(0)}f(a+x)dx>\frac{1}{3}\int_{\mathbb{R}}f(x)dx,\ \theta-a\geq\delta+\epsilon,\\
        <-\frac{1}{2}\int_{B_\epsilon(0)}f(a+x)dx<-\frac{1}{3}\int_{\mathbb{R}}f(x)dx, a-\theta\leq\delta+\epsilon. 
    \end{cases}\]
    Thus if $|\theta-a|\geq \delta+\epsilon$, $(g\ast f)'(\theta)\neq 0$.
\end{proof}

In higher dimensions, since all the first partial derivatives vanish at a critical point, if the conditions of Lemma \ref{lem-critical point} suit for $\frac{\partial g}{\partial x_i}$ and $f$, the same conclusion holds. 

\begin{corollary}\label{cor-higher dim crit. pt}
    Suppose $g$ has continuous partial derivatives $|\partial g/\partial x_i|\leq 1$ and $f\geq 0$ is continuous. If for each $i$, there exists $\delta_i>0$ such that $|\partial g/\partial x_i|>\frac{1}{2}$ for $|x_i|\geq\delta_i$ and $\int_{B_\epsilon(a)}fdx\geq\frac{2}{3}\int_{\mathbb{R}^n}fdx$ for some $\epsilon>0$ and $a=(a_1,\dots,a_n)\in\mathbb{R}^n$, then any potential stationary point $\theta=(\theta_1,\dots,\theta_n)$ of $g\ast f$ satisfies $|\theta_i-a_i|<\delta_i+\epsilon$ for each $i$.
\end{corollary}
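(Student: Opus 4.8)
The plan is to reduce everything to the one-dimensional argument of Lemma \ref{lem-critical point}, carried out one coordinate at a time. First I would observe that a stationary point $\theta=(\theta_1,\dots,\theta_n)$ of $g\ast f$ is characterized by the vanishing of every first partial derivative, and that by the linearity of convolution recorded in Section \ref{sec-convex} we have $\frac{\partial(g\ast f)}{\partial x_i}=\frac{\partial g}{\partial x_i}\ast f$. Hence at a stationary point $\frac{\partial g}{\partial x_i}\ast f(\theta)=0$ for each $i$, and it suffices to show that this identity forces $|\theta_i-a_i|<\delta_i+\epsilon$.

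Next, fixing an index $i$, I would write $\frac{\partial g}{\partial x_i}\ast f(\theta)=\int_{\mathbb{R}^n}\frac{\partial g}{\partial x_i}(\theta-a-x)f(a+x)\,dx$ after translating the integration variable by $a$, and then split the domain into the Euclidean ball $B_\epsilon(0)$ and its complement, mirroring the proof of Lemma \ref{lem-critical point}. The tail over $\mathbb{R}^n\smallsetminus B_\epsilon(0)$ is controlled in absolute value by $\int_{\mathbb{R}^n\smallsetminus B_\epsilon(0)}f(a+x)\,dx<\frac{1}{3}\int_{\mathbb{R}^n}f\,dx$, using $|\partial g/\partial x_i|\leq 1$ together with the mass-concentration hypothesis $\int_{B_\epsilon(a)}f\,dx\geq\frac{2}{3}\int_{\mathbb{R}^n}f\,dx$.

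The decisive step is the estimate over $B_\epsilon(0)$, and the one genuinely higher-dimensional observation is that for any $x\in B_\epsilon(0)$ the $i$-th coordinate obeys $|x_i|\leq|x|<\epsilon$. Consequently, if $|\theta_i-a_i|\geq\delta_i+\epsilon$, then $|\theta_i-a_i-x_i|\geq|\theta_i-a_i|-|x_i|>\delta_i$ throughout the ball, so the hypothesis $|\partial g/\partial x_i|>\frac{1}{2}$ applies to the entire integrand. As in the univariate case this makes $\bigl|\int_{B_\epsilon(0)}\frac{\partial g}{\partial x_i}(\theta-a-x)f(a+x)\,dx\bigr|>\frac{1}{2}\int_{B_\epsilon(0)}f(a+x)\,dx>\frac{1}{3}\int_{\mathbb{R}^n}f\,dx$, which strictly dominates the tail bound; the two pieces therefore cannot cancel and $\frac{\partial g}{\partial x_i}\ast f(\theta)\neq 0$. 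Running this for each $i$ yields $|\theta_i-a_i|<\delta_i+\epsilon$ for all $i$, as claimed.

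I expect the main obstacle to be the sign bookkeeping inside the ball rather than the size estimate: to pass from $|\partial g/\partial x_i|>\frac{1}{2}$ to a definite-sign inequality that genuinely prevents cancellation, one needs $\partial g/\partial x_i$ to keep a constant sign on the relevant half-line in the $i$-th coordinate, which (as implicitly in Lemma \ref{lem-critical point}) follows from the monotonicity of $\partial g/\partial x_i$ furnished by the convexity of $g$. Once this is granted, the remainder is a coordinatewise transcription of the one-dimensional proof, with the projection inequality $|x_i|\leq|x|$ doing the work of confining the multidimensional ball to the univariate regime.
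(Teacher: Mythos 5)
Your proposal is correct and takes essentially the same approach as the paper: the paper justifies this corollary in a single line by applying the argument of Lemma \ref{lem-critical point} to each partial derivative $\partial g/\partial x_i$ separately, which is precisely the coordinatewise ball/complement splitting you carry out, with the projection inequality $|x_i|\leq\|x\|<\epsilon$ playing the same role. One small repair to your final paragraph: the constant-sign step does not require convexity of $g$ (which the corollary never assumes) --- continuity of $\partial g/\partial x_i$ together with $|\partial g/\partial x_i|>\frac{1}{2}$ on the connected half-space $\{x: x_i\geq\delta_i\}$ (and likewise on $\{x: x_i\leq-\delta_i\}$) already forces a constant sign there by the intermediate value theorem, which is all that is needed to rule out cancellation inside the ball.
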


\subsection{Construction of convex kernel}\label{sec-construction}

We begin with the one-dimensional case. As indicated by Lemma \ref{lem-critical point}, the convolution kernel $g$ should behave like a ``quasilinear" function. It is also supposed to be convex to avoid local optima traps as conceived in the section \ref{sec-convex}. Choices are still abundant in these regards. However, in practice, since computing convolution is already expensive, our choice had better be fixated on ``low degree" quasilinear functions. This leads us to design $g$ with only linear and quadratic fractions.

To make it even more economical, let $g$ be even with linear fractions $|x\pm\delta|+h$ outside $|x|\leq\delta$ for some border hight $h$; inside $|x|\leq\delta$, we use a quadratic fraction $ax^2+b$ to smooth out the boundary. On the border, continuity requires the latter to reach height $h$, while differentiability requires $2ax=\pm1$. Then we have the simplest system
\begin{equation}\label{eq-1-dim border eq}
   \begin{cases}
       a\delta^2+b=h\\
       2a\delta=1,
   \end{cases}
\end{equation}
which is solved by $a=\frac{1}{2\delta}$ and $b=h-\frac{\delta}{2}$. Thus, we simply define for any $\delta>0$

\begin{equation}\label{eq-1-dim g}
    g(x)=\begin{cases}
        x-\delta+h,\quad x>\delta,\\
        \frac{1}{2\delta}x^2-\frac{\delta}{2}+h, \quad -\delta\leq x\leq\delta,\\
        -x+\delta+h, \quad x<-\delta.
    \end{cases}
\end{equation}

Note that $g$ behaves like a smoothed absolute value function, whose convexity is self-explained. The one-dimensional construction is instructive enough for any higher dimension: meld a cone with a paraboloid along a sphere! Without further digestion, we spout the definition for $\mathbb{R}^n$ as

\begin{equation}\label{eq-higher g}
    g(x)=\begin{cases}
        \|x\|-\delta+h=\sqrt{x_1^2+\cdots+x_n^2}-\delta+h,\quad \|x\|>\delta,\\
        \frac{1}{2\delta}\|x\|^2-\frac{\delta}{2}+h=\frac{1}{2\delta}(x_1^2+\cdots+x_n^2)-\frac{\delta}{2}+h, \quad \|x\|\leq\delta.
    \end{cases}
\end{equation}

If necessary, one may define $g$ as melds of variant cones with general paraboloids along suitable ellipsoids. However, such variant seems not profitable for our paradigm.

 \subsection{Integral concentration}\label{sec-lift}
 Again, as indicated by Lemma \ref{lem-critical point}, the last key point of our paradigm is to rescale $f$ so that its integral is concentrated around its optimum. Our simplest idea is to increase the gap between smaller and larger values while not shifting the variable, which suggests the use of composition functions $\varphi\circ f$ with monotonically increasing functions $\varphi$. Moreover, with respect to the direction of inequality required by Lemma \ref{lem-critical point}, we had better use convex $\varphi$, resorting to Jensen's inequality: \[\frac{1}{b-a}\int_a^b\varphi(f(x))dx\geq\varphi\left(\frac{1}{b-a}\int_a^b fdx\right).\] 

Concerned with computational cost, a convenient choice is $\varphi(x)=x^N$ (for large $N$). In this case, we may derive an efficient bound for the super parameter $N$ as follows.

\begin{lemma}\label{lem-x^N}
    Suppose $f\geq 0$ is continuous with a unique maximum at $a$ and compactly supported in $\mathbb{R}^n$. Then for any $\epsilon>0$ such that the average of $f$ in $B_\epsilon(a)$ is strictly larger than the essential supremum of $f$ outside $B_\epsilon(a)$, say $1+\rho_f(\epsilon)$ multiple of the latter, there exists $N=O\left(\frac{n}{\rho_f(\epsilon)}\ln\left(\frac{1}{\epsilon}\right)\right)$ such that $$\int_{B_\epsilon(a)}\varphi(f(x))dx\geq\frac{2}{3}\int_{\mathbb{R}^n}\varphi(f(x))dx$$
\end{lemma}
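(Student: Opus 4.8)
The plan is to split $\int_{\mathbb{R}^n} f^N\,dx$ into the contribution from $B_\epsilon(a)$ and from its complement, bound each separately, and then choose $N$ large enough that the inner piece dominates. Write $M$ for the essential supremum of $f$ over $\mathbb{R}^n\setminus B_\epsilon(a)$ and set $\rho=\rho_f(\epsilon)>0$, so that the hypothesis reads $\frac{1}{|B_\epsilon(a)|}\int_{B_\epsilon(a)} f\,dx\geq (1+\rho)M$. If $M=0$ the outer integral vanishes and the claim is trivial, so we may assume $M>0$.

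First I would lower-bound the inner integral via Jensen's inequality for the convex function $t\mapsto t^N$: since $\frac{1}{|B_\epsilon(a)|}\int_{B_\epsilon(a)} f^N\,dx\geq\left(\frac{1}{|B_\epsilon(a)|}\int_{B_\epsilon(a)} f\,dx\right)^N$, the hypothesis yields $\int_{B_\epsilon(a)} f^N\,dx\geq c_n\epsilon^n\,(1+\rho)^N M^N$, where $c_n$ denotes the volume of the unit ball in $\mathbb{R}^n$. Next I would bound the outer integral trivially: since $f\leq M$ a.e. on $\mathbb{R}^n\setminus B_\epsilon(a)$, we get $\int_{\mathbb{R}^n\setminus B_\epsilon(a)} f^N\,dx\leq V M^N$, with $V$ the volume of the part of the support of $f$ lying outside the ball. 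Enclosing the compact support in a ball of some fixed radius $R$ gives $V\leq c_n R^n$; the point of this step is that the constant $c_n$ then cancels in the comparison, leaving only the genuine scale ratio $R/\epsilon$.

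Observe that the desired bound $\int_{B_\epsilon(a)} f^N\geq\frac{2}{3}\int_{\mathbb{R}^n} f^N$ is equivalent to $\int_{B_\epsilon(a)} f^N\geq 2\int_{\mathbb{R}^n\setminus B_\epsilon(a)} f^N$. By the two estimates above it therefore suffices to guarantee $c_n\epsilon^n(1+\rho)^N M^N\geq 2c_n R^n M^N$, i.e. $(1+\rho)^N\geq 2(R/\epsilon)^n$. Taking logarithms, this holds once $N\geq\frac{\ln 2+n\ln(R/\epsilon)}{\ln(1+\rho)}$. Using $\ln(1+\rho)\geq\rho/2$ in the relevant small-gap regime $0<\rho\leq 1$, and noting that for small $\epsilon$ the terms $\ln 2$ and $n\ln R$ are dominated by $n\ln(1/\epsilon)$, the threshold simplifies to $N=O\!\left(\frac{n}{\rho}\ln(1/\epsilon)\right)$, which is exactly the claim.

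I expect no deep obstacle here; the work lies in careful bookkeeping. The key points to get right are: routing the outer volume through the compact support so that $c_n$ cancels and only $R/\epsilon$ survives; verifying that the additive $\ln 2$ and the $n\ln R$ contributions are absorbed into $n\ln(1/\epsilon)$ to produce the stated big-$O$ (which is the natural reading in the regime of small $\rho$ and small $\epsilon$); and using the strict hypothesis $\rho>0$, which is what makes $(1+\rho)^N$ grow geometrically and eventually overpower the fixed factor $2(R/\epsilon)^n$.
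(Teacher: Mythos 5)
Your proposal is correct and takes essentially the same route as the paper's proof: Jensen's inequality lower-bounds $\int_{B_\epsilon(a)}f^N\,dx$ by $|B_\epsilon(a)|\bigl((1+\rho)M\bigr)^N$, the integral over the complement is controlled by the essential supremum $M$ times the (compact) support volume, and taking logarithms yields $N=O\bigl(\tfrac{n}{\rho}\ln(1/\epsilon)\bigr)$. Your direct bound $\int_{C\setminus B_\epsilon(a)}f^N\,dx\leq VM^N$ is simply a cleaner phrasing of the paper's $L^N$-to-$L^\infty$ norm-convergence step, and your explicit treatment of the $M=0$ case and of the small-$\rho$ regime (via $\ln(1+\rho)\geq\rho/2$) makes the bookkeeping tighter than the paper's sketch, which waves at these points with ``$|C|$ is supposed to be negligible.''
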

\begin{proof}
    Suppose $f$ is supported on a compact set $C\subset \mathbb{R}^n$. Then by Jensen's inequality
    \[\frac{\int_{B_\epsilon(a)}f^Ndx}{\int_{C\smallsetminus B_\epsilon(a)}f^Ndx}\geq \frac{|B_\epsilon(a)|\left(\frac{1}{|B_\epsilon(a)|}\int_{B_\epsilon(a)}fdx\right)^N}{\int_{C\smallsetminus B_\epsilon(a)}f^Ndx}=|B_\epsilon(a)|\left(\frac{\frac{1}{|B_\epsilon(a)|}\int_{B_\epsilon(a)}fdx}{\left(\int_{C\smallsetminus B_\epsilon(a)}f^Ndx\right)^{1/N}}\right)^N.\]
    Note that $\|f\|_{N'}\leq\|f\|_N\leq|C|^{\frac{1}{N}-\frac{1}{N'}}\|f\|_{N'}$ for any $1<N<N'$, and the denominator $\|f|_{C\smallsetminus B_\epsilon(a)}\|_N$ tends to $\|f|_{C\smallsetminus B_\epsilon(a)}\|_\infty\leq (1+\rho_f(\epsilon))\frac{1}{|B_\epsilon(a)|}\int_{B_\epsilon(a)}fdx$ by assumption. In addition, $|B_\epsilon(a)|=O(\epsilon^n)$. Hence, to bound the above ratio away from 0, we need ($|C)|$ is supposed to be negligible)
    $$N=O\left(\frac{\ln(1/\epsilon^n)}{\ln(1+\rho(\epsilon)}\right)=O\left(\frac{n}{\rho_f(\epsilon)}\ln\left(\frac{1}{\epsilon}\right)\right).$$
\end{proof}

We may also use the exponential function $e^{Nx}$ to rescale target functions as how softmax works, but it seems easy to blow up the integral calculation. Compared to our naive baseline setup with simple power functions, one may choose better rescaling functions suitable for the value distributions of special target functions.

\subsection{Restriction of convex functions}\label{sec-restriction}

This section provides a slightly deeper insight into convexity, which inspires us to propose a more delicate optimization method as in section \ref{sec-zigzag}.

We begin with digestion of the two-dimensional case. Suppose $f(x,y)$ is a (bounded) convex function with continuous second derivatives. For any $x$, define $a(x)=\arg\mathrm{max}_yf(x,y)$ and $F(x)=f(x,a(x))$. Then we claim 

\begin{lemma}\label{lem-2-dim restriction}
    The function $a(x)$ is differentiable and the restriction $F(x)$ is also convex.
\end{lemma}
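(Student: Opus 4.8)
The plan is to pin down the optimal curve $a(x)$ through the first-order optimality condition and then differentiate the restriction twice, reducing the convexity of $F$ to the positive semidefiniteness of the Hessian of $f$ recorded in Section \ref{sec-convex}.

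\emph{Differentiability of $a$.} First I would characterize $a(x)$ implicitly: at an interior optimum in the $y$-variable the partial derivative vanishes, so $a(x)$ solves $\partial_y f(x,a(x))=0$. Setting $G(x,y):=\partial_y f(x,y)$ and using that $f\in C^2$, I would apply the implicit function theorem to $G$ at points where $\partial_y G=\partial_{yy}f\neq 0$. Convexity of $f$ along the fibre $\{x=\mathrm{const}\}$ gives $\partial_{yy}f\ge 0$, and at a nondegenerate optimum $\partial_{yy}f>0$, so the hypothesis holds and $a\in C^1$ with \[a'(x)=-\frac{\partial_{xy}f}{\partial_{yy}f}\Big|_{(x,a(x))}.\]

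\emph{Convexity of $F$.} Next I would compute $F'$ and $F''$ by the chain rule. Because $\partial_y f$ vanishes along the curve, the first-order term drops out (the envelope identity), giving $F'(x)=\partial_x f(x,a(x))$. Differentiating once more and substituting $a'(x)$ yields \[F''(x)=\partial_{xx}f+\partial_{xy}f\cdot a'(x)=\partial_{xx}f-\frac{(\partial_{xy}f)^2}{\partial_{yy}f}=\frac{\partial_{xx}f\,\partial_{yy}f-(\partial_{xy}f)^2}{\partial_{yy}f},\] with all derivatives evaluated at $(x,a(x))$. The numerator is exactly $\det\nabla^2 f$, which is nonnegative because the Hessian is positive semidefinite (Section \ref{sec-convex}), while the denominator is positive; hence $F''\ge 0$ and $F$ is convex.

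The step I expect to be the main obstacle is the nondegeneracy required to run the implicit function theorem and to divide by $\partial_{yy}f$, namely excluding points where $f$ is flat in the $y$-direction so that the Hessian degenerates along the fibre. Under strict convexity in $y$ this is automatic; in the merely convex case I would either assume this nondegeneracy or regularize by $f_\eta:=f+\eta\|(x,y)\|^2$, carry out the argument for each $\eta>0$, and pass to the limit $\eta\downarrow 0$, using that convexity of $F$ is preserved under pointwise limits. I would also note a shortcut for the convexity claim alone: since partial optimization of a jointly convex function preserves convexity, $F$ is convex independently of the smoothness computation, which is genuinely needed only for the differentiability of $a$.
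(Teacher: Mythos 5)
Your proposal is correct and follows essentially the same route as the paper: implicit differentiation of the stationarity condition $\partial_y f(x,a(x))=0$ to obtain $a'(x)=-\partial_{xy}f/\partial_{yy}f$, the envelope identity $F'(x)=\partial_x f(x,a(x))$, and a second differentiation whose sign is controlled by positive semidefiniteness of the Hessian. The one substantive difference is the final step: you substitute $a'$ and write $F''=\bigl(\partial_{xx}f\,\partial_{yy}f-(\partial_{xy}f)^2\bigr)/\partial_{yy}f$, which requires $\partial_{yy}f>0$ and the two-dimensional fact that a positive semidefinite matrix has nonnegative determinant, whereas the paper leaves $F''$ in the quadratic-form shape $(1,a'(x))\,\nabla^2 f\,(1,a'(x))^T$, which is nonnegative by semidefiniteness alone, needs no division, and is exactly the form that scales to Corollary \ref{cor-higher-dim restriction}. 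On the other hand, your handling of the degenerate case $\partial_{yy}f=0$ (invoking the implicit function theorem only under nondegeneracy, regularizing by $\eta\|(x,y)\|^2$ and passing to the limit, or falling back on the fact that partial optimization of a convex function is convex as a pointwise supremum) is more careful than the paper, which simply assumes $\partial^2 f/\partial y^2\neq 0$ without comment.
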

\begin{proof}
    Noting that $\forall x, \frac{\partial f(x,a(x))}{\partial y}=0$, $a(x)$ is clearly continuous with respect to $\frac{\partial f}{\partial y}$. The ordinary differential equation allows for a local differentiable extension of the solution to $a(x)$. More specifically, by differentiating it again, we get (assuming $\frac{\partial^2f}{\partial y^2}\neq 0$.) 
    \[\frac{\partial^2f}{\partial x\partial y}+\frac{\partial^2f}{\partial y^2}a'(x)=0, \ \mathrm{i.e.}, \ a'(x)=-\frac{\frac{\partial^2f}{\partial x\partial y}}{\frac{\partial^2f}{\partial y^2}}.\]

    To check convexity, we just calculate differentials: $F'(x)=\frac{\partial{f}}{\partial x}+\frac{\partial f}{\partial y}a'(x)=\frac{\partial{f(x,a(x))}}{\partial x}$, and 
    \begin{align*}
        F''(x)&=\frac{\partial^2f}{\partial x^2}+2\frac{\partial^2f}{\partial x\partial y}a'(x)+\frac{\partial^2f}{\partial y^2}a'(x)^2\\
        &=(1,a'(x))\begin{pmatrix}
            \frac{\partial^2f}{\partial x^2}&\frac{\partial^2f}{\partial x\partial y}\\
            \frac{\partial^2f}{\partial x\partial y}&\frac{\partial^2f}{\partial y^2}
        \end{pmatrix}\begin{pmatrix}
            1\\
            a'(x),
        \end{pmatrix}
    \end{align*}
    where the matrix is just the Hessian of $f$. If $f$ is convex, its Hessian is semi-positive definite, so that $F''(x)\geq 0, \forall x$. Thus, $F(x)$ must be convex.
\end{proof}

Without further digestion, a similar argument can prove the following.

\begin{corollary}\label{cor-higher-dim restriction}
    For any convex $f$ with continuous second derivatives in $\mathbb{R}^n$, the restriction $f(x_1, \arg\max_{x_2,\dots,x_n}f(x_1,\dots,x_n))$ is also convex.
\end{corollary}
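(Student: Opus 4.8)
The plan is to mirror the two-dimensional argument of Lemma~\ref{lem-2-dim restriction}, replacing the scalar ``inner'' variable by the block $y=(x_2,\dots,x_n)\in\mathbb{R}^{n-1}$ and writing $x=x_1$ for the free variable, so that $f=f(x,y)$, $a(x)=\arg\max_y f(x,y)\in\mathbb{R}^{n-1}$, and $F(x)=f(x,a(x))$. First I would record the first-order optimality condition $\nabla_y f(x,a(x))=0$, now a system of $n-1$ equations. Viewing $G(x,y):=\nabla_y f(x,y)$ as a map with $G(x,a(x))=0$ and Jacobian $\partial_y G=\nabla^2_{yy}f$ (the lower-right $(n-1)\times(n-1)$ block of the Hessian), the implicit function theorem supplies a $C^1$ extension of $a(x)$ provided $\nabla^2_{yy}f$ is nonsingular along the optimal locus. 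Differentiating $\nabla_y f(x,a(x))=0$ in $x$ then yields the vector identity $\nabla_{yx}f+\nabla^2_{yy}f\,a'(x)=0$, i.e. $a'(x)=-(\nabla^2_{yy}f)^{-1}\nabla_{yx}f$, the direct analogue of the scalar formula for $a'(x)$.

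Next I would differentiate $F$. Because $\nabla_y f=0$ at the argmax, the inner-variable contribution drops out and $F'(x)=\partial_x f(x,a(x))$, exactly as before. Differentiating once more gives $F''(x)=\partial_{xx}f+(\nabla_{xy}f)^{T}a'(x)$, and substituting the formula for $a'(x)$ expresses $F''(x)$ as the Schur complement $\partial_{xx}f-(\nabla_{xy}f)^{T}(\nabla^2_{yy}f)^{-1}\nabla_{yx}f$ of the block $\nabla^2_{yy}f$ in the full Hessian $H=\nabla^2 f$.

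The key step, and the one I expect to carry the whole argument, is to recognize this Schur complement as a single quadratic form. Using the symmetry $\nabla_{xy}f=\nabla_{yx}f$ together with the explicit $a'(x)$, a short block-matrix computation shows
\[
F''(x)=\begin{pmatrix}1&a'(x)^{T}\end{pmatrix}H\begin{pmatrix}1\\ a'(x)\end{pmatrix},
\]
which is the exact generalization of the $(1,a'(x))$-sandwich appearing in the two-dimensional proof, now with $a'(x)$ a vector in $\mathbb{R}^{n-1}$. Since $f$ is convex its Hessian $H$ is positive semidefinite, so every such quadratic form is nonnegative; hence $F''(x)\geq 0$ for all $x$ and $F$ is convex.

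The main obstacle is not the convexity conclusion, which falls out immediately from the quadratic-form identity, but the regularity bookkeeping: one must justify differentiability of the vector-valued $a(x)$ through the implicit function theorem, and this requires the nondegeneracy hypothesis that $\nabla^2_{yy}f$ be invertible along the optimal locus (the higher-dimensional counterpart of the assumption $\partial^2 f/\partial y^2\neq 0$ used earlier). Granting this, verifying the block identity above is routine linear algebra, and the proof closes exactly as in the two-dimensional case.
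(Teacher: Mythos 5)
Your proposal is correct and is essentially the proof the paper intends: the paper gives no separate argument for Corollary~\ref{cor-higher-dim restriction} beyond ``a similar argument'' referring to Lemma~\ref{lem-2-dim restriction}, and your block-variable version (implicit function theorem for $a(x)\in\mathbb{R}^{n-1}$, $a'(x)=-(\nabla^2_{yy}f)^{-1}\nabla_{yx}f$, and the identity $F''(x)=\begin{pmatrix}1&a'(x)^T\end{pmatrix}H\begin{pmatrix}1\\a'(x)\end{pmatrix}\geq 0$) is exactly that generalization, down to the same nondegeneracy caveat on $\nabla^2_{yy}f$ that the paper assumes via $\partial^2 f/\partial y^2\neq 0$.
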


It can be similarly generalized to the restriction of any variables. The above observation serves as the theoretical foundation of a convolutional optimization scheme involving only one-dimensional sampling of function values, which relies on a local-trap-breaking appliance of convolution with convex functions as follows. Suppose $f$ does not attain global optima at $a\in\mathbb{R}^n$. Then for the direction $v$ from $a$ and any convex function $g$ in $\mathbb{R}$, we define the following \textit{directional convolution}:

\begin{equation}\label{eq-directional convolution}
    g\ast_{(a,v)}f(\theta)=\int_\mathbb{R}g(\theta-t)f(a+tv)dt,
\end{equation}
which is convex for $f\geq0$ by Lemma \ref{lem-1-dim convex} and differentiable provided $f$ (or $g$) is. Furthermore, if $g$ satisfies the condition of Lemma \ref{lem-critical point}, say as in the form of the quasi-linear function as of (\ref{eq-1-dim g}), $(g\ast_{(a,v)}f)'(0)\neq 0$ unless $a$ is the global optimum. Specifically, we calculate 
\begin{align}    (g\ast_{(a,v)}f)'(\theta)&=\int_\mathbb{R}g'(\theta-t)f(a+tv)dt=\int_\mathbb{R}g(t)\frac{df(a+(\theta-t)v)}{d\theta}dt\notag\\
    &=\int_\mathbb{R}g(t)(v\cdot\mathrm{grad}f)(a+(\theta-t)v)dt\notag\\
    &=v\cdot\int_\mathbb{R}g(t)(\mathrm{grad}f)(a+(\theta-t)v)dt\label{eq-G expression},
\end{align}
where the last term is short for a vector of integrals. Over all directions $v$ ($\|v\|=1$), the unique one (provided only one optimum) from $a$ to the global optimum has the steepest slope. Thus, the optimal direction to choose is where $G(v)=(g\ast_{(a,v)}f)'(0)$ is stationary. Note that 
\[
    g'(x)=\begin{cases}
    1,\quad x>\delta,\\
        \frac{x}{\delta},\quad -\delta\leq x\leq\delta,\\
        -1,\quad x<-\delta,
\end{cases}\]
we can calculate ($v=(v_1,\dots,v_n)$)
\begin{align}
    \frac{\partial G(v)}{\partial v_i}&=\int_\mathbb{R}g'(-t)t\frac{\partial f(a+tv)}{\partial x_i}dt\notag\\
    &=\int^{-\delta}_{-\infty} t\frac{\partial f(a+tv)}{\partial x_i}dt-\int_{\delta}^{+\infty} t\frac{\partial f(a+tv)}{\partial x_i}dt-\frac{1}{\delta}\int_{-\delta}^{\delta}t^2\frac{\partial f(a+tv)}{\partial x_i}dt.\label{eq-partial G}
\end{align}
\subsubsection{Restricted direction on unit sphere} To solve for $v$ under the condition $\|v\|=1$, the Lagrange multiplier $L(v,\lambda)=G(v)+\lambda(1-\|v\|)$ must have ($\frac{\partial L}{\partial\lambda}=1-\|v\|=0$)
\begin{equation}\label{eq-Lagrange multiplier}
    \frac{\partial L}{\partial v_i}=\frac{\partial G(v)}{\partial v_i}-\lambda\frac{v_i}{\|v\|}=\frac{\partial G(v)}{\partial v_i}-\lambda v_i=0, \forall i,
\end{equation}
which implies $\sum_i v_i\frac{\partial G(v)}{\partial v_i}=v\cdot\mathrm{grad}G(v)=\lambda\|v\|^2=\lambda$. On the other hand, multiplying $v_i$ and summing up through $i$ on (\ref{eq-partial G}) implies
\begin{align}
    v\cdot\mathrm{grad}G(v)=&\int^{-\delta}_{-\infty} t\frac{df(a+tv)}{dt}dt-\int_{\delta}^{+\infty} t\frac{df(a+tv)}{dt}dt-\frac{1}{\delta}\int_{-\delta}^{\delta}t^2\frac{df(a+tv)}{dt}dt\notag\\
    =&tf(a+tv)\mid_{-\infty}^{-\delta}-\int^{-\delta}_{-\infty} f(a+tv)dt-tf(a+tv)\mid^{+\infty}_{\delta}+\int_{\delta}^{+\infty}f(a+tv)dt\notag\\
    &-\frac{1}{\delta}t^2f(a+tv)\mid_{-\delta}^\delta+\frac{1}{\delta}\int_{-\delta}^{\delta}2tf(a+tv)dt\notag\\
    =&-\delta f(a-\delta v)+\delta f(a+\delta v)-\delta(f(a+\delta v)-f(a-\delta v))\notag\\
    &-G(v)+\frac{1}{\delta}\int_{-\delta}^{\delta}tf(a+tv)dt\notag\\
    =&-G(v)+\frac{1}{\delta}\int_{-\delta}^{\delta}tf(a+tv)dt\label{eq-pde of G}
\end{align}
using integral by parts provided that $f$ is differentiable and compactly supported ($f(\infty)=0$). Altogether by (\ref{eq-Lagrange multiplier}) and (\ref{eq-pde of G}) we get 
\begin{equation}\label{eq-lambda}
    \lambda=-G(v)+\frac{1}{\delta}\int_{-\delta}^{\delta}tf(a+tv)dt.
\end{equation}

Note that (\ref{eq-pde of G}) is a first order partial differential equation, whose homogeneous counterpart is $v\cdot\mathrm{grad}G(v)=-G(v)$. Note that the disturbing term behaves more like a linear function as $\delta$ chosen small. Although we may use characteristic ODE (see 3.2.1 in \cite{Evens}) to analyze the solution of (\ref{eq-pde of G}) more accurately, we will utilize a simpler simulation based on the following. 
\begin{lemma}\label{lem-1st order pde}
    For constant $b,c\in\mathbb{R}$, the general solution to $x\cdot\mathrm{grad}f(x)=-f(x)+b\cdot x+c$ is
    $f=C\|x\|^{-1}+\frac{1}{2}b\cdot x+c$.
\end{lemma}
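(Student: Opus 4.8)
The plan is to exploit the linearity of the equation $x\cdot\mathrm{grad}\,f(x)=-f(x)+b\cdot x+c$ and split the task into (i) producing one particular solution and (ii) solving the associated homogeneous equation $x\cdot\mathrm{grad}\,f=-f$, then adding the two.

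First I would guess and verify the particular solution $f_p(x)=\frac{1}{2}b\cdot x+c$ by direct substitution. Since $\mathrm{grad}(b\cdot x)=b$ and $\mathrm{grad}(c)=0$, one gets $x\cdot\mathrm{grad}\,f_p=\frac{1}{2}b\cdot x$, while the right-hand side evaluated at $f_p$ is $-\left(\frac{1}{2}b\cdot x+c\right)+b\cdot x+c=\frac{1}{2}b\cdot x$. The two sides agree, so $f_p$ solves the inhomogeneous equation and accounts for the affine part of the claimed formula.

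Next I would solve the homogeneous equation $x\cdot\mathrm{grad}\,f=-f$. The key observation is that $x\cdot\mathrm{grad}$ is the infinitesimal generator of dilations: writing $x=r\omega$ with $r=\|x\|$ and $\omega\in S^{n-1}$, it acts as $r\,\partial_r$, so along each ray the PDE collapses to the elementary ODE $r\,\frac{d}{dr}f=-f$, whose solution is $f=C\,r^{-1}$. Equivalently, by the method of characteristics the characteristic curves are the rays $t\mapsto e^{t}x_0$, along which $f$ decays like $e^{-t}$, i.e.\ $f$ is homogeneous of degree $-1$; and one checks directly that $\mathrm{grad}\,\|x\|^{-1}=-x\|x\|^{-3}$, whence $x\cdot\mathrm{grad}\,\|x\|^{-1}=-\|x\|^{-1}$, consistent with Euler's identity for degree $-1$. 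Adding this to $f_p$ yields the stated $f=C\|x\|^{-1}+\frac{1}{2}b\cdot x+c$.

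The step I would be most careful about is the precise meaning of \emph{general solution}. The scaling ODE fixes only the radial profile, so the integration ``constant'' $C$ may a priori depend on the angular variable $\omega=x/\|x\|$; the full homogeneous solution space is $\{\,C(x/\|x\|)\,\|x\|^{-1}\,\}$ for arbitrary $C$ on the sphere, and the formula in the statement records the radially symmetric representative $C=\mathrm{const}$. I would therefore remark explicitly that we restrict to rotationally invariant (equivalently, radially regular) solutions, which is the single scalar degree of freedom relevant to the application in \eqref{eq-pde of G}; this restriction is the main obstacle to reading the claim literally as written, and the rest of the argument is routine verification.
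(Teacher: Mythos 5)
Your proposal is correct and shares the paper's overall skeleton---linearity, one particular affine solution plus the general solution of the homogeneous equation---but the two treatments of the homogeneous part $x\cdot\mathrm{grad}f=-f$ genuinely differ, and yours is the more careful one. The paper divides by $f$ to get $x\cdot\mathrm{grad}(\ln|f|)=-1$, and for uniqueness considers two solutions and argues that $x\cdot\mathrm{grad}\bigl(\ln(|f|/|f_1|)\bigr)=0$ forces the ratio to be constant because ``the directional derivative is zero in all directions.'' That step misreads the operator: $x\cdot\mathrm{grad}$ differentiates only along the ray through $x$, so the ratio is constant along rays but may still depend on the angular variable. Your dilation-generator argument ($x\cdot\mathrm{grad}=r\,\partial_r$ in polar coordinates) makes exactly this visible: the homogeneous solutions are $C(x/\|x\|)\,\|x\|^{-1}$ with $C$ an arbitrary function on $S^{n-1}$, and the lemma's formula is the general solution only after restricting to rotationally invariant solutions, as you explicitly flag. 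This caveat is not pedantic for the intended application: the paper itself remarks, shortly after Lemma \ref{lem-1st order pde}, that $G(v)$ is \emph{not} radial (see (\ref{eq-G expression})), so the simulation (\ref{eq-simulation of G}) implicitly selects the radially symmetric representative. The particular-solution halves of the two arguments are equivalent (the paper runs undetermined coefficients on the ansatz $b'\cdot x+c'$; you verify $\frac{1}{2}b\cdot x+c$ directly). In short, the paper's route is slightly shorter but its uniqueness claim has a genuine gap; your route costs one change of coordinates and delivers the correct description of the full solution space, of which the stated formula is the radial slice.
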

\begin{proof}
    We may rewrite the homogeneous equation as
    \[x\cdot\frac{\mathrm{grad}f}{f}=x\cdot\mathrm{grad}(\ln|f|)=a,\]
    which has an obvious solution $\ln|f|=\|x\|^{-1}$. Suppose there is another solution $f_1$. Then 
    \[x\cdot\mathrm{grad}(\ln|f|-\ln|f_1|)=x\cdot\mathrm{grad}(\ln\left(\frac{|f|}{|f_1|}\right)=0,\]
    which means the directional derivative of $\ln\left(\frac{|f|}{|f_1|}\right)$ is zero in all directions, i.e., $\ln\left(\frac{|f|}{|f_1|}\right)$ is constant. Thus, the homogeneous equation has general solution $f=c\|x\|^{-1}$.

    Let a special solution to the non-homogeneous equation be $f^*=b'\cdot x+c'$. Then we get 
    \[b'\cdot x=(-b'+b)\cdot x-c'+c,\]
    which implies $c'=c,b'=\frac{b}{2}$.
\end{proof}

According to Lemma \ref{lem-1st order pde}, we simulate $G(v)$ by
\begin{equation}\label{eq-simulation of G}
    G(v)\sim C\|v\|^{-1}+\frac{1}{2\delta}\int_{-\delta}^{\delta}tf(a+tv)dt,
\end{equation}
noting that $\frac{1}{\delta}\int_{-\delta}^{\delta}tf(a)dt=0$. Then by (\ref{eq-lambda}), 
$$\lambda\sim -C\|v\|^{-1}+\frac{1}{2\delta}\int_{-\delta}^{\delta}tf(a+tv)dt$$
and (\ref{eq-Lagrange multiplier}) is simulated 
\[\frac{\partial G(v)}{\partial v_i}\sim -C\frac{v_i}{\|v\|^3}+\frac{1}{2\delta}\int_{-\delta}^{\delta}t^2\frac{\partial f(a+tv)}{\partial x_i}dt\sim -C\frac{v_i}{\|v\|}+\frac{v_i}{2\delta}\int_{-\delta}^{\delta}tf(a+tv)dt,\]
that is ($\|v\|=1$),
\begin{equation}\label{eq-determing v}
    \int_{-\delta}^{\delta}t^2\frac{\partial f(a+tv)}{\partial x_i}dt\sim v_i\int_{-\delta}^{\delta}tf(a+tv)dt.
\end{equation}
By expressing $f(a+tv)=f(a)+t\sum_{j}^nv_j\frac{\partial f(a)}{\partial x_j}+O_f(t^2)$, (\ref{eq-determing v}) implies $$\frac{\partial f(a)}{\partial x_i}\sim v_i\sum_{j}^nv_j\frac{\partial f(a)}{\partial x_j}, \forall i,$$
which happens (not surprisingly) only for $v\sim\mathrm{grad}f$ when $\mathrm{grad}f\neq\boldsymbol{0}$. When $\mathrm{grad}f=\boldsymbol{0}$, say when $a$ is a local optimum of $f$, expressing (\ref{eq-determing v}) to the second order implies 
\[H(f)v=(v^TH(f)v)v,\]
which means that we may take $v$ as the main eigenvector of the Hessian $H(f)$ of $f$.
We summarize it as follows.
\begin{lemma}\label{lem-local-trap-breaking}
    Let $f\geq 0$ be differentiable in $\mathbb{R}^n$ with a unique maximum $a_0$ and $g$ as (\ref{eq-1-dim g}). Then at any point $a\neq a_0$, there is a direction $v\in S^{n-1}$ such that $(g\ast_{(a,v)}f)'(0)\neq 0$. Moreover, $v\sim \mathrm{grad} f(a)\neq\boldsymbol{0}$, or $v$ as the main eigenvector of the Hessian of $f$ at $a$ (when $\mathrm{grad} f(a)=\boldsymbol{0}$) is the direction for $(g\ast_{(a,v)}f)'(0)$ to approach the global optimum. 
\end{lemma}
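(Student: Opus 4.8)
The plan is to treat the two assertions separately: first the existence of an escape direction, then the identification of that direction with $\mathrm{grad}f(a)$ (resp.\ a Hessian eigenvector). Throughout I would work with the bounded-kernel form of the directional derivative. Since $g$ in (\ref{eq-1-dim g}) is even, $g'$ is odd and bounded by $1$, so differentiating under the integral sign and substituting $t\mapsto -t$ gives
\begin{equation*}
    G(v)=(g\ast_{(a,v)}f)'(0)=-\int_{\mathbb{R}}g'(t)f(a+tv)\,dt,
\end{equation*}
an absolutely convergent integral once $f\ge 0$ is compactly supported. Convexity and differentiability of $\theta\mapsto g\ast_{(a,v)}f(\theta)$ are already supplied by Lemma \ref{lem-1-dim convex}, so $G(v)$ is exactly the slope at the origin of a convex profile, and the whole content is whether this slope can be made nonzero.

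For existence when $\mathrm{grad}f(a)\neq\boldsymbol{0}$, I would test the single direction $v=\mathrm{grad}f(a)/\|\mathrm{grad}f(a)\|$. Writing $g'(t)=\mathrm{sgn}(t)$ for $|t|>\delta$ and $g'(t)=t/\delta$ on $[-\delta,\delta]$, the integral splits as
\begin{equation*}
    G(v)=\Big(\int_{-\infty}^{-\delta}-\int_{\delta}^{\infty}\Big)f(a+tv)\,dt-\frac{1}{\delta}\int_{-\delta}^{\delta}t\,f(a+tv)\,dt.
\end{equation*}
The far part measures the imbalance of the ray masses of $f$ on the two half-lines through $a$; if it is nonzero we are done immediately. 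If instead it vanishes (the rays are balanced), Taylor expanding $f(a+tv)=f(a)+t\,\|\mathrm{grad}f(a)\|+O(t^2)$ in the near part and in the residual $\int_0^\delta$ pieces of the far part shows the two surviving quadratic-in-$\delta$ contributions do not cancel, leaving $G(v)=\tfrac{\delta^2}{3}\|\mathrm{grad}f(a)\|+O(\delta^3)$. Either way $G(v)\neq0$ for $\delta$ small enough, which proves the existence clause at every point where the gradient is nonzero.

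For the direction characterization I would simply assemble the stationarity computation already carried out before the lemma. Imposing $\|v\|=1$ and writing the Lagrange condition (\ref{eq-Lagrange multiplier}), then combining the first-order PDE (\ref{eq-pde of G}) with its closed-form simulation from Lemma \ref{lem-1st order pde}, reduces the optimality of $v$ to (\ref{eq-determing v}). Substituting the first-order Taylor expansion $f(a+tv)=f(a)+t\sum_j v_j\partial_j f(a)+O(t^2)$ turns (\ref{eq-determing v}) into $\partial_i f(a)\sim v_i\,(v\cdot\mathrm{grad}f(a))$ for every $i$, whose only unit solution is $v\parallel\mathrm{grad}f(a)$; this both selects the direction and is consistent with the test vector used above. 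When $\mathrm{grad}f(a)=\boldsymbol{0}$ this condition degenerates, so I would push the expansion to second order, where (\ref{eq-determing v}) becomes $H(f)v=(v^{T}H(f)v)v$, i.e.\ $v$ must be a unit eigenvector of the Hessian; taking the eigenvector of largest magnitude (the ``main'' one) gives the steepest admissible slope and hence the preferred search direction.

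The main obstacle I anticipate is the degenerate case $\mathrm{grad}f(a)=\boldsymbol{0}$. There the quadratic part of $f$ along $v$ is even in $t$, so it contributes nothing to the leading order of $G(v)$; the leading asymmetry is cubic, and one only gets $G(v)\neq0$ provided the ray masses are genuinely unbalanced or the higher-order profile is asymmetric, which is what a non-global critical point provides but is not forced by the Hessian alone. The second soft spot is the replacement of the exact PDE (\ref{eq-pde of G}) by the simulated solution of Lemma \ref{lem-1st order pde}; making the direction statement rigorous would require controlling the error between $G$ and this simulation, e.g.\ via the characteristic ODE alluded to in the text, rather than the formal ``$\sim$''. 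I would also note in passing the harmless exceptional point where $a$ is precisely the minimizer of the convex convolution, at which $G$ vanishes in all directions; this is the approximate optimum the scheme converges to, and shrinking $\delta$ together with the integral concentration of Lemma \ref{lem-x^N} pushes it onto $a_0$.
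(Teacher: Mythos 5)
Your proposal is sound where it matters, and it is worth separating its two halves against the paper. For the directional characterization (the ``moreover'' clause), the paper has no self-contained proof at all: the lemma is explicitly a summary of the derivation that precedes it, namely the Lagrange condition (\ref{eq-Lagrange multiplier}), the first-order PDE (\ref{eq-pde of G}), its simulated solution via Lemma \ref{lem-1st order pde}, and the Taylor expansion of (\ref{eq-determing v}) giving $\partial_if(a)\sim v_i\,(v\cdot\mathrm{grad}f(a))$ at first order and $H(f)v=(v^TH(f)v)v$ at second order. Your second half reassembles exactly this chain, so there you coincide with the paper, including its reliance on the formal ``$\sim$'' simulation whose non-rigor you rightly flag (the paper concedes this itself, deferring to characteristic ODEs). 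Where you genuinely go beyond the paper is the existence clause: testing $v=\mathrm{grad}f(a)/\|\mathrm{grad}f(a)\|$ directly, splitting $G(v)=-\int_{\mathbb{R}}g'(t)f(a+tv)\,dt$ into far and near parts, and bookkeeping the cancellations to get $G(v)=F(0)+\tfrac{\delta^2}{3}\|\mathrm{grad}f(a)\|+O(\delta^3)$, with $F(0)$ the limiting ray imbalance, is more elementary and more rigorous than anything in the paper, and your coefficient $\delta^2/3$ checks out. Two caveats. First, your dichotomy ``if the far part is nonzero we are done immediately'' is not airtight at fixed $\delta$, since the $O(\delta^2)$ near contribution can in principle cancel a nonzero far part; the clean statement is that for $\delta$ sufficiently small either $F(0)\neq0$ dominates or the $\tfrac{\delta^2}{3}\|\mathrm{grad}f(a)\|$ term does, so what you actually prove is the lemma for all sufficiently small $\delta$, whereas the statement fixes $g$ (hence $\delta$) in advance. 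Second, your diagnosis of the degenerate case $\mathrm{grad}f(a)=\boldsymbol{0}$ is accurate and important: there the quadratic profile of $f$ along $v$ is even in $t$ and contributes nothing to $G(v)$, so neither your argument nor the paper's establishes $G(v)\neq0$ from the Hessian alone; this is a gap in the lemma as stated, which the paper covers only with the same heuristic simulation, not a defect introduced by your write-up.
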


However, the above result is too demanding for the knowledge of data or resources.

\subsubsection{Non-restricted direction}
Actually, it is not only more convenient to optimize the choice of direction $v$ without any restriction, but also more reasonable, noting that $G(v)$ is not radial (see (\ref{eq-G expression})). Similarly by differentiating $G(v)$ at a stationary $v$, multiplying $v_i$ and summing up through $i$, (\ref{eq-pde of G}) shows
\begin{align*}
    0=v\cdot\mathrm{grad}G(v)=-G(v)+\frac{1}{\delta}\int_{-\delta}^{\delta}tf(a+tv)dt,
\end{align*}
or,
\begin{equation}\label{eq-stationary v}
    G(v)=\frac{1}{\delta}\int_{-\delta}^{\delta}tf(a+tv)dt.
\end{equation}
Now let $H(r)=\frac{1}{\delta}\int_{-\delta}^{\delta}tf(a+trv)dt, \forall r>0$. Then $H(1)=G(v)$ shall attain the maximum, hence we should have $H'(1)=0$, i.e., 
\begin{align}
    0=H'(r)\mid_{r=1}=&\frac{1}{\delta}\int_{-\delta}^{\delta}tf(a+trv)dt\notag\\
    =&\frac{1}{\delta}\int_{-\delta}^{\delta}t^2v\cdot\mathrm{grad}f(a+tv)dt\notag\\
    =&\frac{1}{\delta}\int_{-\delta}^{\delta}t^2\frac{df(a+tv)}{dt}dt\notag\\
    =&\delta(f(a+\delta v)-f(a-\delta v))-\frac{2}{\delta}\int_{-\delta}^{\delta}tf(a+tv)dt,\notag
\end{align}
or
\begin{equation}\label{eq-stationary v for r}
    \frac{2}{\delta^2}\int_{-\delta}^{\delta}tf(a+tv)dt=f(a+\delta v)-f(a-\delta v)
\end{equation}
If we still utilize the simulation of $G$ as (\ref{eq-simulation of G}), altogether with (\ref{eq-stationary v}) and (\ref{eq-stationary v for r}), we get
\[G(v)=\frac{1}{\delta}\int_{-\delta}^{\delta}tf(a+tv)dt\sim C\|v\|^{-1}+\frac{1}{2\delta}\int_{-\delta}^{\delta}tf(a+tv)dt\]
so that 
\begin{equation}\label{eq-radius of v}
    C\|v\|^{-1}\sim\frac{1}{2\delta}\int_{-\delta}^{\delta}tf(a+tv)dt=\frac{\delta}{4}(f(a+\delta v)-f(a-\delta v)),
\end{equation}
and 
\begin{equation}\label{eq-estimate of G}
    G(v)\sim\frac{\delta}{2}(f(a+\delta v)-f(a-\delta v)).
\end{equation}
From (\ref{eq-radius of v}) we may estimate that the least radius of $v$ should be
\[\|v\|\sim\sqrt{\frac{8C}{\delta^2\|\mathrm{grad}(f)\|}},\]
while (\ref{eq-estimate of G}) indicates that we need to find a direction at the above radius such that $f(a+\delta v)-f(a-\delta v)$ attains maximum. In summary,

\begin{proposition}\label{prop-directional convolution}
    The direction $v$ on which the directional convolution $(g\ast_{(a,v)} f)'(0)$ attains maximum is the one on radius $\sqrt{\frac{8C}{\delta^2\|\mathrm{grad}(f)\|}}$ such that $f(a+\delta v)-f(a-\delta v)$ attains maximum.
\end{proposition}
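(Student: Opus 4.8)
The plan is to identify the optimal direction as an unconstrained stationary point of $G(v)=(g\ast_{(a,v)}f)'(0)$ and then, via the PDE simulation of Lemma \ref{lem-1st order pde}, to reduce the two resulting scalar conditions to a statement purely about the finite differences $f(a\pm\delta v)$. First I would record the stationarity of $G$ along $v$ itself: differentiating (\ref{eq-partial G}), multiplying by $v_i$ and summing over $i$, the integration-by-parts identity (\ref{eq-pde of G}) collapses $v\cdot\mathrm{grad}\,G(v)=0$ into the clean relation (\ref{eq-stationary v}), namely $G(v)=\frac{1}{\delta}\int_{-\delta}^{\delta}tf(a+tv)dt$. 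This is the first of the two equations pinning down $v$.

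Next I would extract the radial condition. Writing $H(r)=\frac{1}{\delta}\int_{-\delta}^{\delta}tf(a+trv)dt$ so that $H(1)=G(v)$, maximality of $G$ forces $H'(1)=0$; differentiating under the integral, rewriting $t^2\partial_t f$ and integrating by parts over $[-\delta,\delta]$ yields (\ref{eq-stationary v for r}), i.e. $\frac{2}{\delta^2}\int_{-\delta}^{\delta}tf(a+tv)dt=f(a+\delta v)-f(a-\delta v)$. At this point I would feed both identities into the simulation (\ref{eq-simulation of G}) furnished by Lemma \ref{lem-1st order pde}: equating the true $G(v)$ from (\ref{eq-stationary v}) with its simulated form isolates the homogeneous part $C\|v\|^{-1}$, and substituting (\ref{eq-stationary v for r}) converts the surviving integral into the finite difference, producing (\ref{eq-radius of v}) together with the estimate (\ref{eq-estimate of G}), $G(v)\sim\frac{\delta}{2}(f(a+\delta v)-f(a-\delta v))$.

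The last step would be to read off the two conclusions. Linearizing the finite difference as $f(a+\delta v)-f(a-\delta v)\approx 2\delta\,v\cdot\mathrm{grad}f(a)$ and balancing it against $C\|v\|^{-1}$ in (\ref{eq-radius of v}) leaves a single scalar equation for $\|v\|$ whose solution is the stated least radius $\sqrt{8C/(\delta^2\|\mathrm{grad}f\|)}$; meanwhile (\ref{eq-estimate of G}) shows that, at a fixed radius, maximizing $G(v)$ is equivalent to maximizing $f(a+\delta v)-f(a-\delta v)$, which is precisely the assertion of the proposition.

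I anticipate the genuine obstacle to lie in the justification of the simulation step rather than in any of the integrations by parts, which are routine. Replacing $G$ by the solution $C\|v\|^{-1}+\frac12 b\cdot v+c$ of the linearized homogeneous equation (Lemma \ref{lem-1st order pde}) discards the true disturbing term $\frac{1}{\delta}\int_{-\delta}^{\delta}tf(a+tv)dt$, which is only approximately linear in $v$ as $\delta\to 0$. Quantifying this error, and with it the exact numerical constant in the radius (the heuristic factor $8$ depends on which order of the Taylor expansion is retained), is where the argument is genuinely non-rigorous; to make it precise one would need either a characteristic-ODE analysis of (\ref{eq-pde of G}) or an explicit small-$\delta$ remainder bound for the disturbing term.
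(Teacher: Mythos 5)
Your proposal follows essentially the same route as the paper's own derivation: unconstrained stationarity of $G$ yielding (\ref{eq-stationary v}), the radial condition $H'(1)=0$ yielding (\ref{eq-stationary v for r}), substitution of both into the simulation (\ref{eq-simulation of G}) from Lemma \ref{lem-1st order pde} to obtain (\ref{eq-radius of v}) and (\ref{eq-estimate of G}), and finally reading off the radius and the finite-difference maximization. Your closing caveat --- that the simulation step, and with it the precise constant $8$ in the radius, is the genuinely non-rigorous part needing a characteristic-ODE or small-$\delta$ remainder analysis --- accurately mirrors the paper's own admission on this point, so the review finds no substantive divergence.
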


When $\mathrm{grad}(f)=\boldsymbol{0}$, we may choose the radius $r$ such that $B_r(a)$ is contained in the support of $f$. In particular, as we approaches the global maximum of $f$, $\|\mathrm{grad}(f)\|$ tends to zero, so that we should the searching radius gradually.

\section{Models of convolutional optimization}
Now we are ready to propose pragmatic models of our convolutional optimization paradigm. We first recall a basic result on the convergence rate of convex optimization, which serves as the baseline for our analysis.

\begin{lemma}[See Theorem 2.6 of \cite{Non-convex}]\label{lem-conv. rate of convex opt.}
    Let $f$ be a convex function in $\mathbb{R}^n$ satisfying
    \[\frac{\alpha}{2}\|x-y\|^2\leq f(y)-f(x)-\nabla f\cdot(y-x)\leq\frac{\beta}{2}\|x-y\|^2,\]
    for some $\alpha,\beta>0$, and execute the projected gradient descent algorithm with step size $\eta\leq\min\{\epsilon,\frac{1}{\beta}\}$. Then after $T=O\left(\frac{1}{\alpha \eta}\ln\left(\frac{\beta}{\epsilon}\right)\right)$ steps, we can approximate the optimum of $f$ by $\epsilon$ error.
\end{lemma}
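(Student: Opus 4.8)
The plan is to read the two-sided inequality as the standard conjunction of two structural hypotheses: the lower bound is $\alpha$-strong convexity and the upper bound is $\beta$-smoothness, and then to reproduce the classical linear-convergence analysis of projected gradient descent under these two properties. Write the iteration as $x_{t+1}=\Pi_X(x_t-\eta\nabla f(x_t))$, with $\Pi_X$ the Euclidean projection onto the convex feasible set and $x^*$ the minimizer. The two structural facts I would record first are that the optimum is a fixed point of the projected step, $x^*=\Pi_X(x^*-\eta\nabla f(x^*))$, which is just the first-order optimality condition rewritten, and that projection onto a convex set is non-expansive, $\|\Pi_X(u)-\Pi_X(w)\|\leq\|u-w\|$.

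Next I would establish a one-step contraction in the squared distance to the optimum. Applying non-expansiveness to the two fixed-point expressions gives
\[\|x_{t+1}-x^*\|^2\leq\|x_t-x^*-\eta(\nabla f(x_t)-\nabla f(x^*))\|^2,\]
and expanding the square yields the cross term $-2\eta(\nabla f(x_t)-\nabla f(x^*))\cdot(x_t-x^*)$ plus $\eta^2\|\nabla f(x_t)-\nabla f(x^*)\|^2$. Here both hypotheses enter: strong convexity lower-bounds the cross term, while smoothness—through co-coercivity of the gradient, $(\nabla f(x)-\nabla f(y))\cdot(x-y)\geq\frac{1}{\beta}\|\nabla f(x)-\nabla f(y)\|^2$—lets me absorb the quadratic term. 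With the prescribed ceiling $\eta\leq\min\{\epsilon,1/\beta\}$ (note $\eta\leq 1/\beta$ already forces $\eta\leq 2/(\alpha+\beta)$ since $\alpha\leq\beta$), the quadratic term becomes subordinate and I obtain a bound of the form
\[\|x_{t+1}-x^*\|^2\leq(1-c\,\alpha\eta)\,\|x_t-x^*\|^2\]
for an absolute constant $c>0$ with $1-c\,\alpha\eta\in(0,1)$.

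Iterating this contraction over $T$ steps gives the geometric decay $\|x_T-x^*\|^2\leq(1-c\,\alpha\eta)^T\|x_0-x^*\|^2$, and I then translate back to function values by $\beta$-smoothness, $f(x_T)-f(x^*)\leq\frac{\beta}{2}\|x_T-x^*\|^2$. Requiring the right-hand side to be at most $\epsilon$, taking logarithms, and using $-\ln(1-c\,\alpha\eta)\geq c\,\alpha\eta$, I solve for
\[T\geq\frac{1}{c\,\alpha\eta}\ln\left(\frac{\beta\|x_0-x^*\|^2}{2\epsilon}\right),\]
which, after absorbing the bounded initial distance into the constant, is exactly $T=O\!\left(\frac{1}{\alpha\eta}\ln\left(\frac{\beta}{\epsilon}\right)\right)$; the $\beta$ inside the logarithm arises precisely from this final smoothness conversion of the distance bound into a value bound.

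The main obstacle is the careful bookkeeping that merges the strong-convexity and smoothness estimates into a single contraction factor while respecting the somewhat unusual step-size ceiling $\eta\leq\epsilon$ rather than the customary $\eta\leq 1/\beta$ alone; one must check that the quadratic gradient term is genuinely dominated along the entire trajectory and that $1-c\,\alpha\eta$ stays strictly inside $(0,1)$ so that the geometric iteration is legitimate. By contrast, the projection contributes no new difficulty once non-expansiveness and the fixed-point identity are in place, so I expect the technical weight of the argument to sit entirely in the combined co-coercivity estimate and in tracking the constants through the logarithm.
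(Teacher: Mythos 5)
The paper does not actually prove this lemma: it is imported, with an adjusted step size, from Theorem 2.6 of the cited monograph of Jain and Kar, and the text explicitly defers to that reference (``see details of the proof therein''). Your argument is precisely the standard projected-gradient analysis used there---non-expansiveness of the projection plus the fixed-point identity at $x^*$, a one-step contraction obtained by splitting the cross term between the strong-convexity and co-coercivity bounds (which is exactly what makes the ceiling $\eta\le 1/\beta$ suffice, as the naive Lipschitz bound $\eta^2\beta^2\|x_t-x^*\|^2$ alone would not), geometric decay, and conversion to function values via $\beta$-smoothness---so your proposal is correct and essentially the same approach as the paper's source.
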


The condition of Lemma \ref{lem-conv. rate of convex opt.} is more or less a bound of the Hessian spectrum of $f$ if written in Rayleigh quotients. The result is a bit different from the original version of \cite{Non-convex} since we need to change the step size; see details of the proof therein. 

We remark that $\delta$ and $N$ (or $\epsilon$) as in Lemma \ref{lem-critical point} and \ref{lem-x^N} will always be super parameters.

\subsection{One-dimensional model}\label{sec-1-dim alg}
Let $g_\delta$ be as (\ref{eq-1-dim g}) and $F_{\delta,N}=g_\delta\ast f^N$ for any $f\geq 0$ continuous in $\mathbb{R}$. By Lemma \ref{lem-critical point}, we know that the location where $F_{\delta,N}$ attains the optimal (minimum) tends to that of $f$ as $\delta\rightarrow{0}$ and $N\rightarrow{+\infty}$. Also, since $F_{\delta,N}$ is convex by Lemma \ref{lem-1-dim convex}, the location is guaranteed to be approached in spite of some minor boundary conditions, say by gradient descent. Then we simulate $F_{\delta,N}'$ by
\begin{equation}\label{eq-derivative of 1-dim g}    F_{\delta,N}'(\theta)=\int_{\mathbb{R}}g'_\delta(\theta-t)f^N(t)dt=\int^{\theta-\delta}_{-\infty} f^N(t)dt-\int_{\theta+\delta}^{+\infty} f^N(t)dt+\frac{1}{\delta}\int_{\theta-\delta}^{\theta+\delta}(\theta-t)f^N(t)dt
\end{equation} 
The three integrals on the right hand side of (\ref{eq-derivative of 1-dim g}) can be numerically simulated by Riemann sums, in which we suggest the use of step sizes $\geq\delta$ for the first two, while $\frac{\delta}{M}$ (say $M=10$) for the last one since its integral interval is much finer. Note that it is not necessary to recap the first two integrals at each step of gradient descent but rather only count the contribution of the increment intervals. More specifically, for $\theta_2>\theta_1$,

\begin{align}
    F_{\delta,N}'(\theta_2)-F_{\delta,N}'(\theta_1)&=\int_{\theta_1-\delta}^{\theta_1+\delta} f^N(t)dt+2\int_{\theta_1+\delta}^{\theta_2-\delta} f^N(t)dt+\int_{\theta_2-\delta}^{\theta_2+\delta} f^N(t)dt\notag\\
    &\quad+\frac{1}{\delta}\int_{\theta_1-\delta}^{\theta_1+\delta}(t-\theta_1)f^N(t)dt-\frac{1}{\delta}\int_{\theta_2-\delta}^{\theta_2+\delta}(t-\theta_2)f^N(t)dt\label{eq-large increment}\\
    &=\int_{\theta_1-\delta}^{\theta_2-\delta} f^N(t)dt+\int_{\theta_1+\delta}^{\theta_2+\delta} f^N(t)dt+\frac{1}{\delta}\int_{\theta_1-\delta}^{\theta_1+\delta}(t-\theta_1)f^N(t)dt\notag\\
    &\quad-\frac{1}{\delta}\int_{\theta_2-\delta}^{\theta_2+\delta}(t-\theta_2)f^N(t)dt.\label{eq-small increment}
\end{align}
If we allow large increment of steps in gradient descent, we may only the middle term in the first line and the two terms in the second line of (\ref{eq-large increment}); if we only allow small increment, which will be the case in Algorithm \ref{alg-1-dim} since gradients are large for power lift, we may use the simulation of (\ref{eq-small increment}).

In practice, since we usually collect values of target functions in multiples of some unit $\mu$ (say $\mu=0.01$ or so), we may set the quantifier $\rho_f(\epsilon)$ in Lemma \ref{lem-x^N} to be some multiple of $\mu$ (say $\rho_f(\epsilon)=10\mu=0.1$). Then the power $N$ in Lemma \ref{lem-x^N} may be taken $N=O\left(\frac{1}{\mu}\ln\left(\frac{1}{\epsilon}\right)\right)$ (say $N=\frac{2}{\mu}\ln\left(\frac{1}{\epsilon}\right)\sim80$ for $\epsilon=0.02$). In practice, we found the bound of $N$ usually too generous. Moreover, since we want to approximate the location of the global optimum to the error of $\delta+\epsilon$, we may take the step size of gradient descent $\eta=\min\{\delta,\epsilon\}$. (One may set $\delta=\epsilon$ for ease of use.) We assemble the above configuration as Algorithm \ref{alg-1-dim}. 
\begin{algorithm}
\caption{Algorithm of one-dimensional convolutional optimization}\label{alg-1-dim}
\begin{algorithmic}
\Require $f\geq 0, \delta>0, \mu>0, N=O\left(\frac{1}{\mu}\ln\left(\frac{1}{\delta}\right)\right), T=O\left(\frac{1}{\alpha \delta}\ln\left(\frac{\beta}{\delta}\right)\right)$
\Ensure $|\arg\max F_{\delta,N}-\arg\max f|<=\delta$
\State $Int_1 \gets \delta\sum_{i=1}^\infty f^N(x_0-i\delta)-\delta\sum_{i=1}^\infty f^N(x_0+i\delta)$
\State $Int_2 \gets \frac{\delta}{100}\sum_{i=-10}^{10} if^N\left(x_0+\frac{i\delta}{10}\right)$
\State $Int_3\gets 0$
\State $Int=Int_1-Int_2$
\While{$0\leq t\leq T$}
\State $x_{t+1}\gets x_t-\delta\cdot \mathrm{sign}(Int)$
\State $Int_1\gets \mathrm{sign}(Int)\cdot\frac{\delta}{10}\sum_{i=1}^{10}\left(f^N\left(x_t-\delta+\frac{i\delta}{10}\right)+f^N\left(x_{t+1}+\delta+\frac{i\delta}{10}\right)\right)$
\Comment{``sign" is to check the direction of descent}
\State $Int_3\gets Int_2$
\State $Int_2 \gets \frac{\delta}{100}\sum_{i=-10}^{10} if^N\left(x_{t+1}+\frac{i\delta}{10}\right)$
\State $Int\gets Int-Int_1-Int_2+Int_3$
\Comment{Gradient update according to (\ref{eq-small increment}). Note that descent produces opposite direction increment.}
\EndWhile
\end{algorithmic}
\end{algorithm}

To bound $T$ for the convergence rate based as in Lemma \ref{lem-conv. rate of convex opt.}, we investigate the Hessian spectrum of $F_{\delta,N}$. Simply differentiating the integrals of (\ref{eq-derivative of 1-dim g}), we derive
\begin{align}
    F_{\delta,N}''(\theta)&=f^N(\theta-\delta)+f^N(\theta+\delta)+\frac{1}{\delta}\int_{\theta-\delta}^{\theta+\delta}f^N(t)dt+\frac{\theta}{\delta}\left(f^N(\theta+\delta)-f^N(\theta-\delta)\right)\notag\\
    &\quad -\frac{1}{\delta}\left((\theta+\delta)f^N(\theta+\delta)-(\theta-\delta)f^N(\theta-\delta)\right)\notag\\
   &=\frac{1}{\delta}\int_{\theta-\delta}^{\theta+\delta}f^N(t)dt\label{eq-2nd derivative 1-dim}
\end{align}
If $f$ is continuous, it becomes $2f^{N}(\xi)$ for some $\theta-\delta<\xi<\theta+\delta$. Thus, in the vanilla use of gradient descent, the simplest estimate of the Hessian bounds would be the optima of $f^N$, i.e. $\alpha=(\min f)^N=m^N$ and $\beta=(\max f)^N=M^N$ (for $f>0$). We may rescale $f$ by $\frac{1}{m}f$ to make its minimal $1$ and maximal $\frac{M}{m}$ (while not changing the ratio $\rho(f)$ for the ease of Lemma \ref{lem-x^N}). Then the bound for $T$ would be $T=O\left(\frac{N}{\delta}\ln\left(\frac{M}{m\delta}\right)\right)$. In practice, we would rather use normalized gradient descent (``sign" descent in 1-dim), so that we may only take $T$ to be $T=O\left(\frac{|C|}{\delta}\right)$, where $|C|$ is the volume of the (compact) support $C$ of $f$.

We summarize the above reasoning as follows.

\begin{proposition}\label{prop-1-dim analysis}
    Assume the same conditions of Lemma \ref{lem-critical point} and \ref{lem-critical point} ($\delta=\epsilon=\eta, f\geq 1$ compactly supported) and $m\leq f\leq M$ for some $m,M>0$ with the value gap $\mu>0$. Then by gradient descent as in Lemma \ref{lem-conv. rate of convex opt.} with $N=O\left(\frac{1}{\mu}\ln\left(\frac{1}{\delta}\right)\right)$, after $T=O\left(\frac{1}{\mu\delta}\ln\left(\frac{M}{m\delta}\right)\ln\left(\frac{1}{\delta}\right)\right)$ steps, the location of the maximum of $f$ can be approximated to $\delta$ error. If using a normalized gradient descent as in Algorithm \ref{alg-1-dim}, the optimum of $f$ can be approximated to $\delta$ error in $O\left(\frac{1}{\delta}\right)$ steps.
\end{proposition}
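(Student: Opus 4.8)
The plan is to assemble the proposition from the three structural ingredients already established: convexity preservation (Lemma \ref{lem-1-dim convex}), critical-point localization (Lemma \ref{lem-critical point}), and the power-lift concentration bound (Lemma \ref{lem-x^N}), and then feed the resulting curvature estimates into the convex-optimization convergence rate (Lemma \ref{lem-conv. rate of convex opt.}). First I would record that $F_{\delta,N}=g_\delta\ast f^N$ is convex by Lemma \ref{lem-1-dim convex}, since $g_\delta$ from (\ref{eq-1-dim g}) is convex and $f^N\geq 0$; thus $F_{\delta,N}$ has a well-defined minimizer toward which (projected or normalized) gradient descent is guaranteed to converge.

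Next I would fix $N$. Applying Lemma \ref{lem-x^N} in dimension $n=1$ with $\epsilon=\delta$ and taking the concentration margin $\rho_f(\delta)$ to be a fixed multiple of the value gap $\mu$ gives $N=O\!\left(\frac{1}{\mu}\ln\frac{1}{\delta}\right)$ for which $\int_{B_\delta(a)}f^N\geq\frac{2}{3}\int_{\mathbb{R}}f^N$. With this $N$ the hypotheses of Lemma \ref{lem-critical point} hold for the pair $(g_\delta,f^N)$: the kernel satisfies $|g_\delta'|\leq 1$ everywhere and $|g_\delta'|>\frac12$ once $|x|\geq\delta$, while the displayed inequality is exactly the concentration just obtained. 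Lemma \ref{lem-critical point} then places every stationary point of $F_{\delta,N}$, in particular its unique minimizer, within $\delta+\epsilon=2\delta=O(\delta)$ of $a=\arg\max f$, which secures the claimed $\delta$-accuracy.

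For the iteration count I would use the Hessian already computed in (\ref{eq-2nd derivative 1-dim}): $F_{\delta,N}''(\theta)=\frac{1}{\delta}\int_{\theta-\delta}^{\theta+\delta}f^N=2f^N(\xi)$ for some $\xi$, so $2m^N\leq F_{\delta,N}''\leq 2M^N$. These furnish the curvature constants $\alpha,\beta$ of Lemma \ref{lem-conv. rate of convex opt.}; rescaling $f\mapsto f/m$ (which leaves $\rho_f$, hence $N$, unchanged) normalizes $\alpha=\Theta(1)$ and $\beta=\Theta((M/m)^N)$. Plugging $\eta=\delta$ into Lemma \ref{lem-conv. rate of convex opt.} gives $T=O\!\left(\frac{1}{\delta}\ln\frac{(M/m)^N}{\delta}\right)$, and bounding $\ln\frac{(M/m)^N}{\delta}=N\ln\frac{M}{m}+\ln\frac{1}{\delta}\leq N\ln\frac{M}{m\delta}$ (valid for $N\geq1$) turns this into $T=O\!\left(\frac{N}{\delta}\ln\frac{M}{m\delta}\right)$; substituting the $N$ from the previous paragraph yields the stated $T=O\!\left(\frac{1}{\mu\delta}\ln\frac{M}{m\delta}\ln\frac{1}{\delta}\right)$.

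Finally, for the normalized (sign) descent of Algorithm \ref{alg-1-dim}, I would argue directly: since $F_{\delta,N}$ is convex in one variable, $F_{\delta,N}'$ is monotone and changes sign exactly at the minimizer, so $\mathrm{sign}(F_{\delta,N}')$ always points toward it and, by the localization above, is nonzero outside the $O(\delta)$-neighborhood. Each step therefore advances the iterate by a fixed $\delta$ strictly toward the minimizer, so it reaches the neighborhood after at most $(\mathrm{diam}\,\mathrm{supp}\,f)/\delta=O(|C|/\delta)=O(1/\delta)$ steps and thereafter stays within $O(\delta)$. The main obstacle I anticipate is not any single estimate but the bookkeeping that makes the pieces align: calibrating $\rho_f(\delta)$ to $\mu$ so the concentration margin does not collapse, and keeping the curvature lower bound $\alpha$ away from $0$ after the power lift (resolved by the $f/m$ rescaling); once these are pinned down the remaining steps are routine substitutions.
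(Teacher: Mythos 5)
Your proposal is correct and follows essentially the same route as the paper: it derives the Hessian bound $F_{\delta,N}''=\frac{1}{\delta}\int_{\theta-\delta}^{\theta+\delta}f^N=2f^N(\xi)$ as in (\ref{eq-2nd derivative 1-dim}), rescales $f$ by $1/m$ to normalize $\alpha$, plugs $\eta=\delta$ and $\beta=(M/m)^N$ into Lemma \ref{lem-conv. rate of convex opt.}, substitutes $N=O\bigl(\frac{1}{\mu}\ln\frac{1}{\delta}\bigr)$ from Lemma \ref{lem-x^N} with $\rho_f$ calibrated to $\mu$, and handles the sign-descent case by traversing the support in $O(|C|/\delta)$ steps. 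Your write-up is in fact somewhat more careful than the paper's (explicitly verifying the hypotheses of Lemma \ref{lem-critical point} for the pair $(g_\delta,f^N)$ and spelling out the monotonicity argument for sign descent), but the decomposition and all key estimates coincide.
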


\begin{remark}
    Note that by (\ref{eq-2nd derivative 1-dim}) the gradient increment ($\sim f^N(\xi)\delta$) tends to explode for $N$ large. In this regard, Algorithm \ref{alg-1-dim} suggests the use of the sign of the gradient to simply determine the direction of descent without any augmentation of step length. The bounds on $T$ and $N$ do not rely on any simulation of target functions, but rather on value distributions. Since data are usually distributed within some range, we always assume target functions are compactly supported and in programming, we may set zero value outside given ranges. 
\end{remark}

\subsection{Higher-dimensional model}\label{sec-higher-dim alg}

Similarly, given the construction of (\ref{eq-higher g}), we have 
\[\frac{\partial g(x)}{\partial x_i}=\begin{cases}
        \frac{x_i}{\sqrt{x_1^2+\cdots+x_n^2}},\quad \|x\|>\delta,\\
        \frac{x_i}{\delta}, \quad \|x\|\leq\delta,
    \end{cases}\]
and so ($\theta=(\theta_1,\dots,\theta_n)$)
\begin{align}\label{eq-derivative of higher g}
\frac{\partial F_{\delta,N}(\theta)}{\partial \theta_i}&=\int_{\mathbb{R}^n}\frac{\partial g(\theta-x)}{\partial x_i}f^N(x)dx\notag\\
&=\int_{\mathbb{R}^n\smallsetminus B_\delta(\theta)}\frac{(\theta_i-x_i)f^N(x)}{\sqrt{(\theta_1-x_1)^2+\cdots+(\theta_n-x_n)^2}}dx-\frac{1}{\delta}\int_{B_\delta(\theta)}(\theta_i-x_i)f^N(x)dx\notag\\
&\sim\int_{\mathbb{R}^n}\frac{(\theta_i-x_i)f^N(x)}{\sqrt{(\theta_1-x_1)^2+\cdots+(\theta_n-x_n)^2}}dx-\frac{1}{\delta}\int_{B_\delta(\theta)}(\theta_i-x_i)f^N(x)dx,
\end{align}
noticing that $\int_{B_\delta(\theta)}\frac{(\theta_i-x_i)f^N(x)}{\sqrt{(\theta_1-x_1)^2+\cdots+(\theta_n-x_n)^2}}dx\sim 0$ for $\delta$ small enough. In practice, we may divide $\mathbb{R}^n$ roughly, say with $\frac{C}{10}$ increment ($C$ is the size of compact support of $f$) for the first integral, while taking $\frac{\delta}{10}$ similarly for the second, so that total sampling of the values amount to $O(10^n)$. This bound is inapplicable when $n$ is large. In addition, compared to (\ref{eq-derivative of 1-dim g}), it seems that the above formula is not helpful for devising an increment updating method similar to (\ref{eq-small increment}) to avoid computing integrals over entire domains. Despite these defects, we propose Algorithm \ref{alg-higher dim} for higher-dimensional convolutional optimization and leave the issue for the zigzag variant algorithm based on Algorithm \ref{alg-1-dim} to resolve.

\begin{algorithm}
\caption{Algorithm of higher-dimensional convolutional optimization}\label{alg-higher dim}
\begin{algorithmic}
\Require $f\geq 0$ supported on $[-k,k]^n\subset\mathbb{R}^n$ for some positive integer $k$, $\delta>0, \mu>0, N=O\left(\frac{1}{\mu}\ln\left(\frac{1}{\delta}\right)\right), T=O\left(\frac{1}{\alpha \delta}\ln\left(\frac{\beta}{\delta}\right)\right)$
\Ensure $|(\arg\max F_{\delta,N}-\arg\max f)_i|<=\delta, \forall i$
\State $\forall 1\leq i\leq n, Int_{1i} \gets \frac{1}{10^n}\sum_{j=1}^n\sum_{l_j=-10k}^{10k}\frac{(x_{0i}-l_i/10)f^N(\boldsymbol{l})}{\sqrt{(x_{01}-l_1/10)^2+\cdots+(x_{0n}-l_n/10)^2}}$
\State $\forall 1\leq i\leq n, Int_{2i} \gets \frac{\delta^{n}}{10^{n+1}}\sum_{\|\boldsymbol{i}\|_1\leq10} \boldsymbol{i}_if^N\left(x_0+\frac{\delta}{10}\boldsymbol{i}\right)$
\State $\forall 1\leq i\leq n, Int_i=Int_{2i}-Int_{1i}$
\While{$0\leq t\leq T$}
\State $x_{t+1}\gets x_t-\delta\cdot\left(\mathrm{sign}(Int_1),\dots,\mathrm{sign}(Int_n)\right)$
\State $\forall 1\leq i\leq n, Int_{1i} \gets \frac{1}{10^n}\sum_{j=1}^n\sum_{l_j=-10k}^{10k}\frac{(x_{(t+1)i}-l_i/10)f^N(\boldsymbol{l})}{\sqrt{(x_{(t+1)1}-l_1/10)^2+\cdots+(x_{(t+1)n}-l_n/10)^2}}$
\State $\forall 1\leq i\leq n, Int_{2i} \gets \frac{\delta^{n}}{10^{n+1}}\sum_{\|\boldsymbol{i}\|_1\leq10} \boldsymbol{i}_if^N\left(x_{t+1}+\frac{\delta}{10}\boldsymbol{i}\right)$
\State $\forall 1\leq i\leq n, Int_i=Int_{2i}-Int_{1i}$
\Comment{Gradient update according to (\ref{eq-derivative of higher g}).}
\EndWhile
\end{algorithmic}
\end{algorithm}

Similar to the one-dimensional case, we write 
\begin{align}
\frac{\partial^2 F_{\delta,N}(\theta)}{\partial\theta_i\partial \theta_j}&=\frac{\partial}{\partial\theta_j}\int_{\mathbb{R}^n\smallsetminus B_\delta(\theta)}\frac{(\theta_i-x_i)f^N(x)}{\sqrt{(\theta_1-x_1)^2+\cdots+(\theta_n-x_n)^2}}dx\notag\\
&\quad-\frac{1}{\delta}\frac{\partial}{\partial\theta_j}\int_{B_\delta(\theta)}(\theta_i-x_i)f^N(x)dx.
\end{align}
Although the explicit calculation of Hessian spectrum of $F_{\delta,N}$ using the above formula is inconvenient, without much further digestion, we conclude

\begin{proposition}\label{prop-higher-dim analysis}
    Assume the same conditions of Lemma \ref{lem-critical point} and \ref{lem-critical point} ($\delta=\epsilon=\eta, f\geq 1$ compactly supported) and $m\leq f\leq M$ for some $m,M>0$ with the value gap $\mu>0$. Then by gradient descent as in Lemma \ref{lem-conv. rate of convex opt.} with $N=O\left(\frac{n}{\mu}\ln\left(\frac{1}{\delta}\right)\right)$, after $T=O\left(\frac{n}{\mu\delta}\ln\left(\frac{M}{m\delta}\right)\ln\left(\frac{1}{\delta}\right)\right)$ steps, the location of the maximum of $f$ can be approximated to $\delta$ error. If using a normalized gradient descent as in Algorithm \ref{alg-1-dim}, the optimum of $f$ can be approximated to $\delta$ error in $O\left(\frac{1}{\delta}\right)$ steps.
\end{proposition}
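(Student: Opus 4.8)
The plan is to transcribe the one-dimensional Proposition \ref{prop-1-dim analysis} into $\mathbb{R}^n$, keeping track of the two places where the dimension $n$ enters. First I would note that $F_{\delta,N}=g_\delta\ast f^N$ is convex by Corollary \ref{cor-convex}, since $f\geq 1$ continuous and compactly supported makes $f^N\geq 0$ continuous while $g_\delta$ of (\ref{eq-higher g}) is convex; hence $F_{\delta,N}$ has a unique minimizer that gradient descent can reach, and it suffices to show this minimizer is $O(\delta)$-close to $a=\arg\max f$.

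For the localization I would apply Lemma \ref{lem-x^N} with $\epsilon=\delta$ and the concentration ratio $\rho_f(\delta)$ identified with the value gap $\mu$: this yields $N=O\!\left(\tfrac{n}{\mu}\ln\tfrac1\delta\right)$, the first and decisive appearance of $n$, together with $\int_{B_\delta(a)}f^N\geq\tfrac23\int_{\mathbb{R}^n}f^N$. I would then localize the minimizer either through Corollary \ref{cor-higher dim crit. pt} (taking each $\delta_i=\delta$), or, since the spherical kernel of (\ref{eq-higher g}) does not literally satisfy its coordinatewise hypothesis, by repeating the integral-splitting estimate of Lemma \ref{lem-critical point} directly for $\nabla g_\delta(\theta-x)=(\theta-x)/\|\theta-x\|$: for $\theta$ outside $B_{2\delta}(a)$ this gradient is an essentially constant unit vector across the concentrated mass in $B_\delta(a)$, so $\|\nabla F_{\delta,N}(\theta)\|\geq\tfrac23\int f^N-\tfrac13\int f^N>0$ and no stationary point survives there. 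Thus $\arg\min F_{\delta,N}$ lies within $O(\delta)$ of $a$, and approximating it to error $\delta$ approximates $\arg\max f$ as claimed.

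For the vanilla gradient-descent count I would bound the Hessian spectrum and feed $\alpha,\beta$ into Lemma \ref{lem-conv. rate of convex opt.}. From $\nabla^2 F_{\delta,N}(\theta)=\int\nabla^2 g_\delta(\theta-x)f^N(x)\,dx$, the paraboloid core contributes $\tfrac1\delta I$ on $B_\delta(\theta)$ and the cone contributes the PSD matrix $\tfrac1{\|\theta-x\|}\big(I-\tfrac{(\theta-x)(\theta-x)^\top}{\|\theta-x\|^2}\big)$ outside. With all dimensional constants suppressed and $n$ held fixed, as elsewhere in this section, one reads off from the scalar identity (\ref{eq-2nd derivative 1-dim}) a lower bound of order $m^N$ and an upper bound of order $M^N$; after rescaling $f\mapsto f/m$ (which leaves the role of $\mu$ untouched) this is $\alpha=\Omega(1)$, $\beta=O((M/m)^N)$. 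With $\eta=\delta$, Lemma \ref{lem-conv. rate of convex opt.} gives $T=O\!\left(\tfrac1\delta\ln\tfrac{(M/m)^N}{\delta}\right)=O\!\left(\tfrac N\delta\ln\tfrac{M}{m\delta}\right)$, and inserting $N=O(\tfrac n\mu\ln\tfrac1\delta)$ produces the stated $T=O\!\left(\tfrac{n}{\mu\delta}\ln\tfrac{M}{m\delta}\ln\tfrac1\delta\right)$. The normalized-descent claim is independent of these bounds: each step of Algorithm \ref{alg-higher dim} moves every coordinate by $\delta$ toward the minimizer inside the box $[-k,k]^n$, so at most $O(k/\delta)=O(1/\delta)$ steps bring the iterate within $\delta$.

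The main obstacle is precisely the Hessian lower bound, and this is where the one-dimensional transcription breaks down. The cone part of $g_\delta$ has a vanishing radial eigenvalue, so strong convexity in the radial direction at $\theta$ is driven only by the core term $\tfrac1\delta\int_{B_\delta(\theta)}f^N$, which carries a factor $|B_\delta|/\delta=\Theta(\delta^{n-1})$ absent in dimension one, where $\tfrac1\delta\!\int_{\theta-\delta}^{\theta+\delta}f^N=2f^N(\xi)$ is bounded below by $m^N$ throughout the support. Tracking this factor honestly would degrade the vanilla bound by a power of $\delta$, so the clean $T$ above is to be read with dimensional constants absorbed; the robust, assumption-free estimate remains the $O(1/\delta)$ normalized-descent bound. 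Making the vanilla bound genuinely dimension-clean would require exploiting the cone curvature in the $n-1$ transverse directions to supplement the radial core, which is the one computation the one-dimensional proof does not supply.
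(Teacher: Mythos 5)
Your proposal follows what is evidently the paper's intended route, because the paper itself offers no real proof of this proposition: it writes down the Hessian integral, remarks that the explicit calculation of the Hessian spectrum of $F_{\delta,N}$ is ``inconvenient,'' and then states the result ``without much further digestion,'' i.e.\ by transcription of the one-dimensional Proposition \ref{prop-1-dim analysis}. Your ingredients --- convexity from Corollary \ref{cor-convex}, concentration via Lemma \ref{lem-x^N} with $\epsilon=\delta$ (which is indeed the only place the factor $n$ in $N$ enters), localization of the stationary point, Hessian bounds fed into Lemma \ref{lem-conv. rate of convex opt.}, and the $O(1/\delta)$ normalized-descent count as the fallback --- are exactly the ingredients the paper invokes implicitly, so in that sense the approaches coincide.

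You go beyond the paper in two places, and both observations are correct. First, you notice that the radial kernel (\ref{eq-higher g}) does not literally satisfy the coordinatewise hypothesis of Corollary \ref{cor-higher dim crit. pt} (outside $B_\delta(0)$ one has $|\partial g/\partial x_i|=|x_i|/\|x\|$, which is not bounded below by $\frac12$ on $\{|x_i|\geq\delta_i\}$ uniformly in the other coordinates), and your repair is sound: for $\theta\notin B_{2\delta}(a)$ the unit vectors $(\theta-x)/\|\theta-x\|$ with $x\in B_\delta(a)$ lie within angle $\pi/6$ of each other, so the concentrated mass contributes at least $\frac{\sqrt3}{2}\cdot\frac23\int f^N>\frac13\int f^N$ to $\|\nabla F_{\delta,N}(\theta)\|$, killing all stationary points there. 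The paper never addresses this mismatch. Second, your diagnosis of the Hessian lower bound is the genuine obstruction: for $n\geq 2$ the cone part of $\nabla^2 g_\delta$ is degenerate in the radial direction, and the paraboloid core contributes only $\frac1\delta\int_{B_\delta(\theta)}f^N=\Theta\bigl(m^N\delta^{n-1}\bigr)$, not the $2f^N(\xi)\geq 2m^N$ read off from (\ref{eq-2nd derivative 1-dim}) in one dimension. Consequently $\alpha=\Omega(m^N)$ does not transcribe, and the stated vanilla-descent bound $T=O\bigl(\frac{n}{\mu\delta}\ln\bigl(\frac{M}{m\delta}\bigr)\ln\bigl(\frac1\delta\bigr)\bigr)$ is not actually established --- by you or by the paper --- without either accepting a $\delta^{-(n-1)}$ loss or exploiting the transverse cone curvature (which, as your own sketch of the annulus geometry suggests, can plausibly rescue the bound, but neither text carries this out). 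So the one gap you flag is a gap in the proposition as asserted rather than a defect of your argument; what both you and the paper rigorously support is the localization claim with $N=O\bigl(\frac n\mu\ln\frac1\delta\bigr)$ and the $O(1/\delta)$ normalized-descent bound.
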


\subsection{One-dimensional ``zigzag" variant algorithms for higher dimensional optimization}\label{sec-zigzag}

When it is inconvenient to sample the values of higher-dimensional functions by convolution with convex kernels, we propose a one-dimensional substitute, which behaves like ``zigzag" walking on hypersurfaces, based on restriction of convex functions as in Section \ref{sec-restriction}. We first propose the zigzag variant Algorithm \ref{alg-zigzag1} to simply reduce the complexity of sampling and the increment calculation issue from Section \ref{sec-higher-dim alg}. 

\begin{algorithm}
\caption{1st zigzag algorithm: zigzag1}\label{alg-zigzag1}
\begin{algorithmic}
\Require $f\geq 0, \delta>0, \mu>0, N=O\left(\frac{1}{\mu}\ln\left(\frac{1}{\delta}\right)\right), T=O\left(\frac{1}{\alpha \delta}\ln\left(\frac{\beta}{\delta}\right)\right), K=O\left(n\right)$
\Ensure $\|\arg\max F_{\delta,N}-\arg\max f\|<=\delta$
\State $\forall 1\leq i\leq n, f_{x_0,i}(t)\gets f(x_0+t\boldsymbol{e}_i)$
\Comment{Define or insert restriction functions}
\State $\forall t, \mathrm{mod}_n(t)\gets 1+(t\mod{n})$
\Comment{Define or insert modulo valued 1 to $n$}
\State $\mathrm{COCP_1}(f,N,T,x_0,i)\gets \text{the output of Algorithm } \ref{alg-1-dim} \text{ on } f_{x_0,i}(t)$ for given $N, T$, initial $x_0$
\Comment{Define or insert Algorithm \ref{alg-1-dim} as a module}
\While{$0\leq t\leq K$}
\State $x_{t+1}\gets\mathrm{COCP}_1(f,N,T,x_t,\mathrm{mod}_n(t))$
\EndWhile
\end{algorithmic}
\end{algorithm}

Note that Algorithm \ref{alg-zigzag1} employs a ``lazy" strategy of choosing directions cyclically. In theory, if a minor direction contributes little to gradient descent, $x_t$ does not move far until it eventually turns to a major direction in a cycle. We may also take other strategies of choosing directions, say, at each step choose the direction to which its partial derivative attains the maximum, which certainly takes more time. Whichever strategy we employ, Proposition \ref{prop-zigzag} indicates that it takes $K=O(n)$ cycles to work.

Unfortunately, Algorithm \ref{alg-zigzag1} may not avoid local traps (see Table \ref{table-zigzag}), for which reason we utilize Proposition \ref{prop-directional convolution} to devise another zigzag algorithm as follows. Note that we sample a direction of zigzag walking for each iteration to reduce complexity, which actually works as well.

\begin{algorithm}
\caption{2nd zigzag algorithm: zigzag2}\label{alg-zigzag2}
\begin{algorithmic}
\Require $f\geq 0, \delta>0, \mu>0, N=O\left(\frac{1}{\mu}\ln\left(\frac{1}{\delta}\right)\right), T=O\left(\frac{1}{\alpha \delta}\ln\left(\frac{\beta}{\delta}\right)\right), K=O(n), \mathcal{N}_{S^{n-1}}$ (uniform distribution on the sphere $S^{n-1}$)
\Ensure $\|\arg\max F_{\delta,N}-\arg\max f\|<=\delta$
\State $\forall x_0,v_0\in\mathbb{R}^n, \|v_0\|=1, r_0>0, f_{x_0,v_0,r_0}(t)\gets f(x_0+tv_0), -r_0\leq t\leq r_0; 0$ otherwise
\Comment{Define or insert restriction functions with given starting point, direction and radius}
\State $r_0\gets \delta$
\State $\mathrm{COCP_1}(f,N,T,x_0,i)\gets \text{the output of Algorithm } \ref{alg-1-dim} \text{ on } f$ for given $N, T$
\Comment{Define or insert Algorithm \ref{alg-1-dim} as a module}
\While{$0\leq t\leq K$}
\State $v_{t+1}=\arg\max_{\|v\|=r_t, v/r_t\sim\mathcal{N}_{S^{n-1}}}|f(x_t+\delta v)-f(x_t-\delta v)|$
\State $x_{t+1}\gets\mathrm{COCP}_1(f_{x_t,v_t,r_t},N,T)$
\State $r_{t+1}=r_{t}+\delta$
\EndWhile
\end{algorithmic}
\end{algorithm}

\begin{proposition}\label{prop-zigzag}
    With the condition of Lemma \ref{lem-critical point}, starting from any point $a_0\in\mathbb{R}^n$, Algorithm \ref{alg-zigzag2} would converge (with probability tending to 1 as sampling of directions approximates the normal or uniform distribution) to the global maximum of $f$ with distance $O(\delta)$, after $K=O(n)$ iterations and $T=O\left(\frac{1}{\alpha(\delta+\epsilon)}\ln\left(\frac{\beta}{\delta+\epsilon}\right)\right)$ steps in each zigzag walking. 
    
\end{proposition}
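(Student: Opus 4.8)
\emph{Proof plan.} The strategy is to split the statement into three independent guarantees: a per-walk convergence bound yielding the stated $T$, a direction-selection argument ensuring strict monotone progress toward $a_0$ with no local trapping, and a dimension-counting argument for $K=O(n)$; the probabilistic qualifier is then layered on top.

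First I would pin down the cost of a single zigzag walk. At iteration $t$ the algorithm calls $\mathrm{COCP}_1$ (Algorithm \ref{alg-1-dim}) on the one-dimensional restriction $f_{x_t,v_t,r_t}$, so the object actually optimized is the power-lifted directional convolution $g_\delta\ast_{(x_t,v_t)}f^N$. By Lemma \ref{lem-1-dim convex} this is convex in the scalar parameter, and by Lemma \ref{lem-critical point} its unique stationary point lies within $\delta+\epsilon$ of the one-dimensional maximizer of $f^N$ along $v_t$. Feeding the convexity bounds $\alpha,\beta$ into Lemma \ref{lem-conv. rate of convex opt.} with step size $\eta=\delta+\epsilon$ gives convergence of the walk to $\delta$ accuracy in $T=O\!\left(\frac{1}{\alpha(\delta+\epsilon)}\ln\!\left(\frac{\beta}{\delta+\epsilon}\right)\right)$ steps, which is exactly Proposition \ref{prop-1-dim analysis} applied along $v_t$.

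Next I would argue that the sampled direction drives the iterate toward $a_0$. The update $v_{t+1}=\arg\max_{\|v\|=r_t}|f(x_t+\delta v)-f(x_t-\delta v)|$ is precisely the maximizer identified in Proposition \ref{prop-directional convolution}, hence it maximizes the initial directional slope $(g_\delta\ast_{(x_t,v_t)}f)'(0)$. By Lemma \ref{lem-local-trap-breaking}, whenever $x_t\neq a_0$ this slope is nonzero and $v_{t+1}$ aligns with $\mathrm{grad}f(x_t)$, or with the leading Hessian eigenvector when the gradient vanishes at a local trap, so the subsequent walk strictly increases $f$ and cannot stall at a non-global critical point. The growing radius $r_{t+1}=r_t+\delta$ compensates for the vanishing gradient near $a_0$, as noted after Proposition \ref{prop-directional convolution}, keeping the directional signal detectable. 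The probabilistic qualifier then enters here: $v_{t+1}$ is an empirical $\arg\max$ over samples from $\mathcal{N}_{S^{n-1}}$, so as the sampling density increases the discrete maximizer converges to the continuous maximizer of the Lipschitz map $v\mapsto|f(x_t+\delta v)-f(x_t-\delta v)|$, making the event that all $K=O(n)$ chosen directions are within tolerance of their ideal directions occur with probability tending to $1$.

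The bound $K=O(n)$ is the crux and the step I expect to be hardest to make fully rigorous. The idea is the restriction principle of Corollary \ref{cor-higher-dim restriction}: once a walk has driven the directional derivative along $v_t$ to zero, the maximal value $\max_s f(x_t+s v_t)$ restricted to the complementary directions is again convex, so each iteration effectively removes one degree of freedom, and iterating the reduction $n$ times collapses the search onto $a_0$ up to the $O(\delta)$ resolution of each walk. The obstacle is that, unlike the exact conjugate-direction picture for a quadratic, the successive adaptive directions need not be exactly Hessian-orthogonal for a general convex $f$, so I would have to control the residual interference between consecutive walks and argue that it is absorbed into the $O(\delta)$ error; for this I would lean on the convexity of every intermediate restriction from Corollary \ref{cor-higher-dim restriction} to guarantee that progress in earlier directions is never undone, so that the accumulated displacement from $a_0$ stays $O(\delta)$ after $O(n)$ iterations.
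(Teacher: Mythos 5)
Your proposal follows essentially the same route as the paper's proof in its two solid parts: the per-walk bound $T$ is obtained, exactly as in the paper, by feeding the convexity of the power-lifted directional convolution (Lemma \ref{lem-1-dim convex}) into Lemma \ref{lem-conv. rate of convex opt.} with step size $\sim\delta+\epsilon$, and local-trap escape is handled by Proposition \ref{prop-directional convolution} (you add Lemma \ref{lem-local-trap-breaking}, which the paper's proof uses only implicitly). The divergence is in how the two weakest points are treated. For $K=O(n)$, the paper argues by pigeonhole: since the chosen directions recur among finitely many axes, the zigzag process is reinterpreted as optimizing the restriction of the full convolution $g\ast f$ onto one coordinate with the other coordinates arg-maxed out, which is convex by Corollary \ref{cor-higher-dim restriction}; the factor $n$ is then dispatched in one sentence as ``relaxation of walking on arbitrary axes.'' You instead propose a sequential dimension-reduction picture (each walk removes one degree of freedom) and explicitly flag the interference between successive, non-conjugate directions as the unresolved obstacle; neither your version nor the paper's closes this gap, but yours at least names it. Note also that the paper's pigeonhole step is really phrased for the cyclic axis choice of Algorithm \ref{alg-zigzag1}, whereas the proposition concerns the randomly sampled directions of Algorithm \ref{alg-zigzag2}, so your framing in terms of the sampled $v_t$ is better matched to the statement actually being proved. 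Finally, you supply an argument for the probabilistic qualifier (convergence of the empirical $\arg\max$ over sphere samples to the continuous maximizer of a Lipschitz map), which the paper's proof omits entirely; that is a genuine, if informal, addition rather than a deviation.
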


\begin{proof}  
Clearly, the values $f(a_k)$ are sliding upward (downward) as we optimize along the axes in each step, and by Proposition \ref{prop-directional convolution} we can always jump out of local optimum traps so that eventually $f(a_k)$ tends to the global optimum. More specifically, since $i_k\in\{1,\dots,n\}$, there must be a subsequence $i_{k_l}=j$ for some $j$. Thus, the zigzag optimization can be seen as an optimization of the restriction $g\ast f$ onto $x_j$ and $\arg\max_{x_1,\dots,x_{j-1},x_{j+1},\dots,x_n}g\ast f(x_1,\dots,x_n)$. Note that the convolution with a higher dimensional $g$ degenerates to a directional convolution when restricted, say for the construction of (\ref{eq-1-dim g}) and (\ref{eq-higher g}). By Corollary \ref{cor-higher-dim restriction}, the restriction is convex, so that the global optimum is guaranteed to be obtained by optimization. Moreover, by Lemma \ref{lem-conv. rate of convex opt.} below, the convergence rate of the restriction is $O\left(\frac{1}{\alpha(\delta+\epsilon)}\ln\left(\frac{\beta}{\delta+\epsilon}\right)\right)$. Here, $\alpha$ and $\beta$ may be seen as the smallest and largest eigenvalues of the Hessian of $g\ast f$, which are positive by assumption (the convolution is strictly convex). Note also that we are supposed to take the step size $\sim\delta+\epsilon$ in gradient descent. The multiple of $n$ is for the relaxation of walking on arbitrary axes.

\end{proof}

\section{Computational results de facto}

\subsection{Disclaimer about limited experimental results}
The main body of this work is focused on the device of algorithms from theoretical baselines. We will provide basic experimental verification of Algorithm \ref{alg-1-dim} and \ref{alg-higher dim} on test functions including the Rosenbrock function and some logarithmic potentials. Codes in C language can be found in Appendix \ref{sec-COCP1} and \ref{sec-COCP2}. Additionally, we exhibit some preliminary results of the zigzag algorithms on a bivariate logarithmic potential with C or C++ codes in Appendix \ref{sec-zigzag1} and \ref{sec-zigzag2}. 
\subsection{Summary of results}
Table \ref{sample-table} summarizes our preliminary results on two kinds of common test functions with chosen super parameters $\delta$ (step size) and $N$ (power). Here, $\mathrm{LOG}_1$ denotes the function $\max\{0,-\log((x-0.5)^2+0.00001)-\log((x-1)^2+0.01)\}$ restricted between $-0.2$ and $1.6$ with global max at $0.5$ and local max at $1$; $\mathrm{POLY}_1$ is $\max\{0, -x^6 + 2x^5 - 4x + 3\}$ restricted between $-2$ and $2$ with global max at $-0.726\dots$ and local max at $1.551\dots$; $\mathrm{LOG}_1$ is $\max\{0,-\log((x-0.5)^2+(y-0.5)^2+0.00001)-\log((x+0.5)^2+(y+0.5)^2+0.01)\}$ restricted in $[-1,1]^2$ with global max at $(0.5,0.5)$ and local max at $(-0.5,-0.5)$; $\mathrm{Rb}_2$ is the Rosenbrock function $1.2-100*(y-x^2)^2-(1-x)^2$ restricted in $[-1.5,1.5]^2$ with global max at $(1,1)$. The number $T$ of steps are usually $O(1/\delta)$.

\begin{table}[t]
\caption{Results of $\mathrm{COCP}_1$ and $\mathrm{COCP}_2$}
\label{sample-table}
\vskip -0.3in
\begin{center}
\begin{small}
\begin{sc}
\begin{tabular}{lcccr}
\toprule
Function & $\delta$ & $N$ & $x_0$ & $x_T$\\
\midrule
$\mathrm{LOG}_1$    & 0.01 & 3 & 1.5 & 0.51 \\
$\mathrm{LOG}_1$    & 0.001 & 6 & 0.8 & 0.501 \\
$\mathrm{POLY}_1$   & 0.01 & 15& 0.5 & -0.71\\
$\mathrm{POLY}_1$    & 0.01 & 15 & 1.8 &  -0.71\\
$\mathrm{LOG}_2$     & 0.1 & 3 & (0.3,-0.3) & (0.5,0.5)  \\
$\mathrm{LOG}_2$      & 0.05 & 3 & (0.8,0.9) & (0.5,0.5) \\
$\mathrm{Rb}_2$     & 0.1 & 5 & (0.3,-0.2) &  (1.0,0.9)       \\
$\mathrm{Rb}_2$   & 0.05 & 8 & (1.2,-1.3) & (1.05,0.95)\\
\bottomrule
\end{tabular}
\end{sc}
\end{small}
\end{center}
\vskip 0.2in
\end{table}

Table \ref{table-zigzag} shows the results of Algorithm \ref{alg-zigzag1} and \ref{alg-zigzag2} on $\mathrm{LOG}_2$, the latter of which aligns with Proposition \ref{prop-zigzag} and is bit sensitive to the step number $T$ (inversely proportional $\delta$ or so). The lazy zigzag Algorithm \ref{alg-zigzag1} performs poorly when the initial $x$-value is negative as shown by the last 7 rows of Table \ref{table-zigzag}. Here it suffices to set $N=3$ and $K=10\sim 15$.

\begin{table}[t]
\caption{Results of zigzag1 and zigzag2 on $\mathrm{LOG}_2$}
\label{table-zigzag}
\vskip -0.3in
\begin{center}
\begin{small}
\begin{sc}
\begin{tabular}{lcccccr}
\toprule
Algorithm & $\delta$ & $T$ & $x_0$ & $y_0$ & $x_K$ & $y_K$\\
\midrule
zigzag1 & 0.05 & 50 & 0.0 & 0.0 & 0.50 & 0.55 \\
zigzag2   & 0.05 & 50 & 0.0 & 0.0 & 0.49 & 0.50 \\
zigzag1 & 0.05 & 50 & -0.5 & -0.5 & -0.35 & -0.40 \\
zigzag2   & 0.05 & 50 & -0.5 & -0.5 & 0.49 & 0.48 \\
zigzag1 & 0.02 & 100 & 0.0 & 0.0 & 0.52 & 0.52 \\
zigzag2   & 0.02 & 100 & 0.0 & 0.0 & 0.51 & 0.49 \\
zigzag1 & 0.02 & 200 & -0.5 & -0.5 & 0.46 & 0.52 \\
zigzag2   & 0.02 & 100 & -0.5 & -0.5 & 0.51 & 0.51 \\
zigzag1 & 0.01 & 200 & -0.1 & -0.1 & -0.41 & -0.40 \\
zigzag1 & 0.01 & 200 & -0.4 & -0.6 & -0.41 & -0.40 \\
zigzag1 & 0.01 & 200 & -1.0 & -1.0 & -0.41 & -0.40 \\
zigzag1 & 0.01 & 200 & -0.1 & 0.2 & -0.41 & -0.40 \\
zigzag1 & 0.01 & 200 & 0.1 & -0.9 & 0.43 & 0.44 \\
zigzag1 & 0.02 & 200 & 0.1 & -1.0 & 0.52 & 0.50 \\
zigzag1 & 0.02 & 200 & 1.0 & -1.0 & 0.52 & 0.52 \\
\bottomrule
\end{tabular}
\end{sc}
\end{small}
\end{center}
\vskip -0.1in
\end{table}

\newpage
\appendix
\onecolumn
\section{\texorpdfstring{$\mathrm{COCP}_1$ in C codes (for $\mathbf{LOG}_1$)}{COCP1 in C codes (for LOG1)}}\label{sec-COCP1}
\begin{lstlisting}
    #include <stdio.h>
#include <math.h>

#define N 3 //Power
#define T 200 //Steps
#define DELTA 0.01 //Gradient descent gap

// Function: f(x)=-log((x-0.5)^2+0.00001)-log((x-1)^2+0.01) on [-0.2, 1.6]

double f(double x) {
    if (x < -0.2 || x >1.6) {
        return 0.0; // Outside range
    }
    return pow(-log(pow(x - 0.5, 2) + 0.00001) - log(pow(x - 1, 2) + 0.01), N); 
    // Inside range
}

// Main function
int main() {
    double x_t = 0.8; // Initialize x_0
    double Int1 = 0.0, Int2 = 0.0, Int3 = 0.0, Int = 0.0;

    // Calculate Int1
    for (int i = 1; i < T; ++i) { // Large range to approximate infinity
        double term1 = f(x_t-i * DELTA);
        double term2 = f(x_t+i * DELTA);
        if (term1 != 0.0) Int1 += DELTA * term1;
        if (term2 != 0.0) Int1 -= DELTA * term2;
    }

    // Calculate Int2
    for (int i = -10; i <= 10; ++i) {
        double xi = i * DELTA / 10.0;
        if (xi >= -0.2 && xi <= 1.6) { // Ensure xi is within range
            Int2 += (i * DELTA* f(x_t+xi)) / 100.0;
        }
    }

    Int = Int1 - Int2;

    // Iterative updates
    for (int t = 0; t <= T; ++t) {
        double y=x_t;
        x_t -= DELTA * copysign(1.0, Int);

        // Update Int1
        Int1 = 0.0;
        for (int i = 1; i <= 10; ++i) {
            double xi = x_t + DELTA + i * DELTA/10.0;
            double yi= y - DELTA + i * DELTA/10.0;
            if (xi >= -0.2 && xi <= 1.6) { // Ensure xi is within range
                Int1 += copysign(1.0, Int) * DELTA/10.0 * f(xi) 
                + copysign(1.0, Int) * DELTA/10.0 * f(yi);
            }
        }

        // Update Int3 and Int2
        Int3 = Int2;
        Int2 = 0.0;
        for (int i = -10; i <= 10; ++i) {
            double shifted_x = x_t + i * DELTA / 10.0;
            if (shifted_x >= -0.2 && shifted_x <= 1.6) { 
            // Ensure shifted_x is within range
                Int2 += (i * DELTA* f(shifted_x)) / 100.0;
            }
        }

        // Update Int
        Int = Int - Int1 - Int2 + Int3;

        // Optional Debugging Output
        // printf("Iteration %d: x_t = %f\n", t, x_t);
    }

    // Print the final result
    printf("Final value of x_t: %f\n", x_t);

    return 0;
}

\end{lstlisting}

\section{\texorpdfstring{$\mathrm{COCP}_2$ in C codes (for Rosenbrock)}{COCP2 in C codes (for Rosenbrock)}}\label{sec-COCP2}


\begin{lstlisting}
   #include <stdio.h>
#include <math.h>

#define N 2 // Define a value for N
#define NP 8 // Define a value of power
#define K 1.5 // Example value for K, adjust accordingly
#define DELTA 0.05 // Example value for delta, adjust accordingly
#define DELTA1 0.2 // Differential gap for Int_1

// Define the test function f

double f(double *x) {
    // Example: f(x) =1.5-100(y-x^2)^2-(1-x)^2
    
    double result=-100*(x[1]-x[0]*x[0])*(x[1]-x[0]*x[0])
    -(1-x[0])*(1-x[0])+1.5;
    if (result>0){return pow(result,NP);}
    else{return 0.0;}
}

//Euclidean norm for Int_1

double norm(double *x, double *y) {
    double sum = 0.0;
    for (int i = 0; i < N; i++) {
        sum += (x[i] - y[i]) * (x[i] - y[i]);
    }
    return sqrt(sum);
}

// Calculate the integrand (x_i - y_i)*f(y)/||x - y|| for Int_1

double integrand1(double *x, double *y, int i) {
       int sign = (x[i] - y[i] > 0) ? 1 : -1;
    double norm_val = norm(x, y);
    int j=1-i;
    if (fabs(x[j]-y[j])<DELTA1)
    {return sign*f(y);}
    else {
        return (x[i] - y[i]) * f(y) / norm_val;
         }
}

// Calculate the integrand (x_i - y_i) * f(y) for Int_2

double integrand2(double *x, double *y, int i) {
    
        return (x[i] - y[i]) * f(y);
}

// Integrate (x_i - y_i) * f(y) / ||x - y|| for Int_1

double integrate1(double *x, int i) {
    double result = 0.0;
    
    // Iterate over all combinations of y in [-K, K]^2 (since N=2)
    // For simplicity, we will sample values in a grid within [-K, K]
    
    int steps=(int)(K/DELTA1);
    
    // Number of grid points per dimension
    
    double y[N];

    // Loop through the grid for each dimension (2D)
    for (int i1 = -steps; i1 <= steps; i1++) {
        for (int i2 = -steps; i2 <= steps; i2++) {
            // Populate y with current grid points
            
            y[0] = i1 * DELTA1;
            y[1] = i2 * DELTA1;
                result += integrand1(x, y, i) * DELTA1 * DELTA1; 
                // Multiply by volume element
        }
    }
    return result;
}

//Integrate (x_i - y_i) * f(y) for Int_2

double integrate2(double *x, int i) {
    double result = 0.0;
    for (int i1 = -10; i1 <= 10; i1++) {
        for (int i2 = -10; i2 <= 10; i2++) {
            //Differential gap $\delta/10$
            
            y[0] = x[0]+i1 * DELTA/10.0;
            y[1] = x[1]+i2 * DELTA/10.0;

            // Evaluate the integrand for each dimension
  
                result += integrand2(x, y, i)* DELTA/100.0; 
                // Multiply by volume element
            }
    }
    return result;
}

// Function to compute Int_1 for a given vector x
void compute_Int_1(double *x, double *Int_1) {
    // Initialize Int_1 to zero
    
    for (int i = 0; i < N; i++) {
        Int_1[i] = 0.0;
    // Compute the sum for Int_1
    
        Int_1[i] = integrate1(x, i);  // scale by 10^n
    }
}

// Function to compute Int_2 for a given vector x

void compute_Int_2(double *x, double *Int_2) {
    // Initialize Int_2 to zero
    
    for (int i = 0; i < N; i++) {
        Int_2[i] = 0.0;
    }
    // Compute the sum for Int_2
    
    for (int i = 0; i < N; i++) {
        Int_2[i] = integrate2(x, i); 
    }
}

// Function to update vector x based on Int_1 and Int_2

void update_x(double *x, double *Int_1, double *Int_2) {
    int sign[N];//array of signs of partial gradients
    
    for (int i = 0; i < N; i++) {
        if (Int_1[i]-Int_2[i] >=DELTA/10.0) 
        {sign[i]=1;}
        else if(Int_1[i]-Int_2[i] <=-DELTA/10.0)
        {sign[i]=-1;}
        else {sign[i]=0;} 
        }
        for (int i=0; i<N; i++){
            x[i] -= DELTA * sign[i];  // Update x with the gradient step
            }
}// sign function

int main() {
    // Initialize vector x to zeros (x0)
    double x[N] = {1.2,-1.3};
    
    double Int_1[N], Int_2[N];
    
    // Compute initial values of Int_1 and Int_2
    
    compute_Int_1(x, Int_1);
    compute_Int_2(x, Int_2);

    // Iterative loop (while t <= T)
    for (int t = 0; t < T; t++) {
        // Update x based on Int_1 and Int_2
        update_x(x, Int_1, Int_2);

        // Recompute Int_1 and Int_2 at the new x
        
        compute_Int_1(x, Int_1);
        compute_Int_2(x, Int_2);

        // Output the updated x after each iteration
        
        printf("Iteration %d, x = [", t);
        for (int i = 0; i < N; i++) {
            printf("%f, ", x[i]);
        }
        
        // Output the updated partial derivatives after each iteration
        
        for (int i = 0; i < N; i++) {
            printf("%f, ", Int_1[i]-Int_2[i]);
        }
        printf("]\n");
     }
     return 0;
}
\end{lstlisting}

\section{zigzag1 in C codes (for LOG2)}\label{sec-zigzag1}
\begin{lstlisting}
    // Online C compiler to run C program online
#include <stdio.h>
#include <math.h>

// Bivariate function f(x, y)
double f(double x, double y) {
    double term1 = -log(pow(x - 0.5, 2) + pow(y - 0.5, 2) + 0.00001);
    double term2 = -log(pow(x + 0.5, 2) + pow(y + 0.5, 2) + 0.01);
    return fmax(0.0, term1 + term2); // Use fmax to implement max{0, *}
}

// Function to compute x_t and y_t, updating x or y based on parity of t
void compute_x_y_t(int N, int T, double DELTA, int t, double initial_x, 
double initial_y, double* x_t, double* y_t) {
    *x_t = initial_x; // Initialize x_t with the given initial_x
    *y_t = initial_y; // Initialize y_t with the given initial_y

    // Determine whether to update x or y based on the parity of t
    int is_even = (t % 2 == 0);
    double* var_to_update = is_even ? x_t : y_t; 
    // Pointer to the variable to update

    double Int1 = 0.0, Int2 = 0.0, Int3 = 0.0, Int = 0.0;

    // Calculate Int1
    for (int i = 1; i < T; ++i) {
        double term1, term2;
        if (is_even) {
            term1 = f(*x_t - i * DELTA, *y_t);
            term2 = f(*x_t + i * DELTA, *y_t);
        } else {
            term1 = f(*x_t, *y_t - i * DELTA);
            term2 = f(*x_t, *y_t + i * DELTA);
        }
        if (term1 != 0.0) Int1 += DELTA * term1;
        if (term2 != 0.0) Int1 -= DELTA * term2;
    }

    // Calculate Int2
    for (int i = -10; i <= 10; ++i) {
        double xi = i * DELTA / 10.0;
        double f_val;
        if (is_even) {
            f_val = f(*x_t + xi, *y_t);
        } else {
            f_val = f(*x_t, *y_t + xi);
        }
        Int2 += (i * DELTA * f_val) / 100.0;
    }

    Int = Int1 - Int2;

    // Iterative update
    for (int iter = 0; iter <= T; ++iter) {
        double prev_var = *var_to_update;
        *var_to_update -= DELTA * copysign(1.0, Int);

        // Update Int1
        Int1 = 0.0;
        for (int i = 1; i <= 10; ++i) {
            double xi, yi;
            if (is_even) {
                xi = *x_t + DELTA + i * DELTA / 10.0;
                yi = *y_t;
                double prev_xi = prev_var - DELTA + i * DELTA / 10.0;
                Int1 += copysign(1.0, Int) * DELTA / 10.0 * f(xi, *y_t) 
                + copysign(1.0, Int) * DELTA / 10.0 * f(prev_xi, *y_t);
            } else {
                xi = *x_t;
                yi = *y_t + DELTA + i * DELTA / 10.0;
                double prev_yi = prev_var - DELTA + i * DELTA / 10.0;
                Int1 += copysign(1.0, Int) * DELTA / 10.0 * f(*x_t, yi)
                + copysign(1.0, Int) * DELTA / 10.0 * f(*x_t, prev_yi);
            }
        }

        // Update Int3 and Int2
        Int3 = Int2;
        Int2 = 0.0;
        for (int i = -10; i <= 10; ++i) {
            double shifted_var = *var_to_update + i * DELTA / 10.0;
            double f_val;
            if (is_even) {
                f_val = f(shifted_var, *y_t);
            } else {
                f_val = f(*x_t, shifted_var);
            }
            Int2 += (i * DELTA * f_val) / 100.0;
        }

        // Update Int
        Int = Int - Int1 - Int2 + Int3;
    }
}

// Main function
int main() {
    int N = 3;          // Power (currently unused, extensible)
    int T = 200;        // Number of iterations
    double DELTA = 0.02; // Step size
    int K = 15;         // Number of cycles

    double x_t = 1;   // Initial x_t
    double y_t = -1;   // Initial y_t

    // Loop for t from 1 to K
    for (int t = 1; t <= K; ++t) {
        double temp_x, temp_y;
        compute_x_y_t(N, T, DELTA, t, x_t, y_t, &temp_x, &temp_y);
        x_t = temp_x;  // Update x_t to the current computation result
        y_t = temp_y;  // Update y_t to the current computation result
        printf("Final value after %d updates: x_t = %f, y_t = %f\n", t,
        x_t, y_t);
    }

    // Output the final value after K updates
    return 0;
}

\end{lstlisting}

\section{zigzag2 in C++ codes (for LOG2)}\label{sec-zigzag2}
\begin{lstlisting}
    #include <stdio.h>
#include <math.h>
#include <stdlib.h>
#include <time.h>

#define PI 3.14159265358979323846
#define DELTA1 0.02    // Fixed step size
#define NUM_DIRECTIONS 20  // Number of random directions
#define T_ITER 100         // Number of iterations for 1D search
#define K 10          // Total number of steps

// Define the objective function f(x, y, N)
double f(double x, double y, int N) {
    double term1 = -log(pow(x - 0.5, 2) + pow(y - 0.5, 2) + 0.00001);
    double term2 = -log(pow(x + 0.5, 2) + pow(y + 0.5, 2) + 0.01);
    return pow(fmax(0.0, term1 + term2), N);
}

// Generate a random unit direction (v_x, v_y)
void generate_random_direction(double* v_x, double* v_y) {
    double theta = 2 * PI * ((double)rand() / RAND_MAX);
    *v_x = cos(theta);
    *v_y = sin(theta);
}

// Compute the difference along direction v
double compute_difference(double x, double y, double v_x, double v_y, 
double r_t, int N) {
    double f_plus = f(x + r_t * v_x, y + r_t * v_y, N);
    double f_minus = f(x - r_t * v_x, y - r_t * v_y, N);
    return fabs(f_plus - f_minus);
}

// Perform 1D search along direction v, return optimal step size s
double compute_s_along_v(double x, double y, double v_x, double v_y, 
int N, int T_iter, double DELTA) {
    double s_t = 0.0;
    double Int1 = 0.0, Int2 = 0.0, Int3 = 0.0, Int = 0.0;

    auto f_s = [&](double s) { return f(x + s * v_x, y + s * v_y, N); };

    for (int i = 1; i < T_iter; ++i) {
        double term1 = f_s(s_t - i * DELTA);
        double term2 = f_s(s_t + i * DELTA);
        if (term1 != 0.0) Int1 += DELTA * term1;
        if (term2 != 0.0) Int1 -= DELTA * term2;
    }

    for (int i = -10; i <= 10; ++i) {
        double si = i * DELTA / 10.0;
        Int2 += (i * DELTA * f_s(s_t + si)) / 100.0;
    }

    Int = Int1 - Int2;

    for (int t = 0; t <= T_iter; ++t) {
        double y = s_t;
        s_t -= DELTA * copysign(1.0, Int);

        Int1 = 0.0;
        for (int i = 1; i <= 10; ++i) {
            double si = s_t + DELTA + i * DELTA / 10.0;
            double yi = y - DELTA + i * DELTA / 10.0;
            Int1 += copysign(1.0, Int) * DELTA / 10.0 * f_s(si) +
                    copysign(1.0, Int) * DELTA / 10.0 * f_s(yi);
        }

        Int3 = Int2;
        Int2 = 0.0;
        for (int i = -10; i <= 10; ++i) {
            double shifted_s = s_t + i * DELTA / 10.0;
            Int2 += (i * DELTA * f_s(shifted_s)) / 100.0;
        }

        Int = Int - Int1 - Int2 + Int3;
    }

    return s_t;
}

// Single-step update of (x_t, y_t)
void update_x_y_t(double* x_t, double* y_t, double r_t, int N, 
int T_iter, double DELTA) {
    double max_diff = -1.0;
    double best_v_x = 0.0, best_v_y = 0.0;

    for (int i = 0; i < NUM_DIRECTIONS; ++i) {
        double v_x, v_y;
        generate_random_direction(&v_x, &v_y);
        double diff = compute_difference(*x_t, *y_t, v_x, v_y, r_t, N);
        if (diff > max_diff) {
            max_diff = diff;
            best_v_x = v_x;
            best_v_y = v_y;
        }
    }

    double s = compute_s_along_v(*x_t, *y_t, best_v_x, best_v_y, 
    N, T_iter, DELTA);

    *x_t += s * best_v_x;
    *y_t += s * best_v_y;
}

// Main function
int main() {
    srand(time(NULL));

    int N = 3;
    double x_t = 0.1;
    double y_t = -1;
    double r_t = DELTA1;

    printf("Initial: x_t = %f, y_t = %f, r_t = %f, f(x_t, y_t, N) 
    = %f\n", x_t, y_t, r_t, f(x_t, y_t, N));

    for (int t = 1; t <= K; ++t) {
        update_x_y_t(&x_t, &y_t, r_t, N, T_ITER, DELTA1);
        r_t += DELTA1;
        printf("Step %d: x_t = %f, y_t = %f, r_t = %f, f(x_t, y_t, N) 
        = %f\n", t, x_t, y_t, r_t, f(x_t, y_t, N));
    }

    printf("Final value after %d steps: x_t = %f, y_t = %f, 
    r_t = %f, f(x_t, y_t, N) = %f\n", K, x_t, y_t, r_t, f(x_t, y_t, N));

    return 0;
}
\end{lstlisting}

\end{document}